\theoremstyle{plain}
\newtheorem{thm}{Theorem}[section]
\newtheorem{rem}[thm]{Remark}
\newtheorem{prop}[thm]{Proposition}
\newtheorem{cor}[thm]{Corollary}
\newtheorem{defn}[thm]{Definition}
\theoremstyle{definition}
\theoremstyle{remark}
\numberwithin{equation}{section}
\newcommand{\BA}{\mathbf{BA}}
\newcommand{\Sing}{\mathbf{Sing}}
\newcommand{\DI}{\mathbf{DI}}
\newcommand{\SL}{\operatorname{SL}}
\newcommand{\diag}{\operatorname{diag}}
\newcommand\mr{M_{m,n}}
\newcommand\amr{$A\in M_{m,n}$}
\newcommand\da{Diophantine approximation}
\newcommand{\R}{{\mathbb{R}}}
\newcommand{\Z}{{\mathbb{Z}}}
\newcommand{\N}{{\mathbb{N}}}
\newcommand\hd{Hausdorff dimension}
\newcommand{\va}{{\boldsymbol{\alpha}}}
\newcommand{\vb}{{\boldsymbol{\beta}}}
\newcommand{\vw}{{\boldsymbol{\omega}}}
\newcommand{\ve}{{\bf e}}
\newcommand{\vp}{{\bf p}}
\newcommand{\vq}{{\bf q}}
\newcommand{\x}{{\bf x}}
\newcommand{\vx}{{\bf x}}
\newcommand{\vy}{{\bf y}}
\newcommand{\ignore}[1]{{}}
\newcommand\eq[2]{
\begin{equation}
\label{eq:#1}
{#2}
\end{equation}
}
\newcommand{\equ}[1]{\eqref{eq:#1}}
\title[Weighted Approximation]{Weighted Uniform Diophantine Approximation \\ of systems of  linear forms}
\author{Dmitry Kleinbock and Anurag Rao}
\address{Brandeis University, Waltham MA
02454-9110 {\tt kleinboc@brandeis.edu}}
\address{Wesleyan University, Middletown CT 06459-0260
{\tt arao@wesleyan.edu}}
\begin{document}

\begin{abstract}
Following the development of weighted asymptotic approximation properties of matrices,
we introduce the analogous uniform approximation properties {(that is, study the improvability of Dirichlet's Theorem)}.
An added feature is the use of general norms, rather than the supremum norm, to quantify the approximation.
In terms of homogeneous dynamics, the approximation properties of an $m \times n$ matrix are governed by a trajectory in $\SL_{m+n}(\R)/\SL_{m+n}(\Z)$ avoiding 
a compact {subset} of the space of lattices called the critical locus {defined with respect to the corresponding norm}.
The trajectory is formed by the action of a one-parameter diagonal subgroup corresponding to the weights. 
{We first state a very precise form of Dirichlet's theorem and prove it for some norms.
Secondly we show, for these same norms, that the set of Dirichlet-improvable matrices has full Hausdorff dimension.
Though the techniques used vary greatly depending on the chosen norm, we expect these results to hold in general.}
\end{abstract}

\thanks{D.K.\ was  supported by  NSF grant  DMS-1900560.}
\date{February, 2022}

\subjclass[2010]{11J13; 11J83, 11H06, 37A17}

\maketitle

\section{Introduction}\label{intro}
Let $m$ and $n$ be positive integers and let $d=m+n$.  We will denote by  $\mr$ the space of $m\times n$ real matrices, and by $\|\cdot \|_\infty$ the supremum norm on $\R^m$, $\R^n$ and $\R^d$. 
The classical theorem of Dirichlet, see e.g.\ \cite[\S I.1.5]{Cassels}, 
asserts that for any \amr\ and $t>1$ there exists $(\vp,\vq)\in \Z^m\times (\Z^n\smallsetminus\{\mathbf{0}\})$ satisfying 
 \begin{equation}\label{dirichletoriginal}
    \|A\vq -\vp \|^m_\infty \le 1/t\quad \textrm{and}\quad \|\vq\|^n_\infty < t.
    \end{equation}
  Here $A$ is viewed as a system of $m$ linear forms $A_1,\dots,A_m$ (rows of $A$) in $n$ variables, and the goal is to approximate the values of these forms at integer points by integers.  A natural question to ask is whether one can improve \eqref{dirichletoriginal} by replacing $1/t$ with a smaller function, that is, consider the following system of inequalities:
 \begin{equation}\label{dirichletpsi}
    \|A\vq -\vp \|^m_\infty < \psi(t)\quad \textrm{and}\quad  \|\vq\|^n_\infty <t,
  \end{equation}
where $\psi$ is a positive  function such that $\psi(t)$ is  strictly less than $\psi_1(t) := 1/t$ for all large enough $t$.
One says that 
$A$ is \textit{$\psi$-Dirichlet} (see \cite{kw1, kw, KSY}) if the system 
\eqref{dirichletpsi}
has solutions in $(\vp,\vq)\in \Z^m\times (\Z^n\smallsetminus \{\mathbf{0}\})$ for all sufficiently large $t$. We will denote the set of $\psi$-Dirichlet matrices by $D_{\infty}(\psi)$. 
(The use of the subscript $\infty$ in \eqref{equ:DI} and in other occurrences below refers to the use of the supremum norm in \eqref{dirichletpsi}.)

The above set-up is usually referred to as \textit{uniform approximation}, as opposed to \textit{asymptotic
approximation} dealing with the system 
\eqref{dirichletpsi}
being solvable   for an unbounded set of $t$. Note that from Dirichlet's Theorem it trivially  follows that $D_{\infty}(c\psi_1) = \mr$ if $c > 1$, and with a little more work, caused by the difference between `$<$' in \eqref{dirichletoriginal}  and `$\le$' in \eqref{dirichletpsi}, one can show that $D_{\infty}(\psi_1) = \mr$ as well, see Theorem~\ref{perfect-Dirichlet-sup} below for a more general statement.

The problem of improving Dirichlet's theorem was initiated by Davenport and Schmidt  \cite{Davenport-Schmidt}  who showed that the set
 \begin{equation}\label{equ:DI}
\DI_{\infty} :=\bigcup_{0<c<1}D_\infty(c\psi_1)
  \end{equation}
of \textit{Dirichlet improvable} matrices
is of Lebesgue measure zero, while having full Hausdorff dimension $mn$. Furthermore, Davenport and Schmidt showed that $\DI $ contains the set  $\BA$ of  \textit{badly approximable} matrices
\begin{equation*}
 \BA := \left\{\text{\amr}:   \inf_{\vp \in \Z^m,\, \vq \in \Z^n\smallsetminus \{{\bf 0}\}} \|A\vq -\vp \|^m_\infty   \|\vq\|^n_\infty > 0 \right\},
\end{equation*}
which was known to be \textit{thick}, that is, have full \hd\ at any point of $\mr$ \cite{Schmidt-BA}.

In this paper we will generalize the above set-up in several different ways. 
It is known that many results in \da\ extend to \textit{approximation with  weights}, an approach allowing to treat forms $A_i $ and components of $\vq$ differently. 
Namely, given a tuple of positive weights 
\begin{equation}\label{weights}
    \vw = (\va,\vb) \in \R_+^{m}\times \R_+^n \text{ with } \sum_{i=1}^{m} \alpha_i =   \sum_{i=1}^n \beta_i = 1,
\end{equation}
one introduces \textit{quasi-norms} associated with $\va$ and $\vb$ respectively:
\begin{align*}
\|\vx\|_{\va}:=\max_i|x_i|^{1/\alpha_i} \quad\textrm{and}\quad \|\vy\|_{\vb}:=\max_j|y_j|^{1/\beta_j}.
\end{align*}
Then, for $\psi$
as above, one  says that  \amr\ is $(\psi, \vw)$-\textit{Dirichlet}, denoted by $A\in D_{\infty,\vw}(\psi)$, if the system of inequalities 
\begin{equation*}\label{equ:dirichletwei}
\|A\vq-\vp\|_{\va}<\psi(t)\quad \textrm{and}\quad \|\vq\|_{\vb}<t
\end{equation*}
 in $(\vp,\vq)\in \Z^m\times (\Z^n\smallsetminus \{\mathbf{0}\})$ for all sufficiently large $t$.  
 In other words, we are considering the solvability of the system \begin{equation}\label{weightsystem}
  \begin{cases}  \left| A_i \cdot \vq - p_i\right| < \psi(t)^{\alpha_i}, &  i = 1,\dots,m;\\ \qquad \qquad \quad|q_j|<t^{\beta_j},\ &j = 1,\dots,n.\end{cases}
\end{equation}
Clearly the unweighted case corresponds to the choice $$\va = (1/m,\dots,1/m)\text{ and }\vb  = (1/n,\dots,1/n).$$ 
 
 A lot of what can be proved for unweighted approximation easily extends to the weighted case. A weighted analogue of Dirichlet's theorem, which is a straightforward consequence of Minkowski's Convex Body Theorem \cite[\S III.2.2]{Cassels}, 
 implies that $D_\vw(c\psi_1) = \mr$ if $c > 1$.  
And with a little more work one can 
prove a stronger result:

 \begin{thm}\label{perfect-Dirichlet-sup}
For 
any choice of weights $\vw$, we have $D_{\infty,\vw}(\psi_{1}) = \mr$.
\end{thm}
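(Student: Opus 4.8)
The plan is to translate the problem into a lattice‑point statement and then apply the equality case of Minkowski's Convex Body Theorem. Let $\Lambda_A\subset\R^d$ denote the unimodular lattice of all vectors $(A\vq-\vp,\vq)$ with $(\vp,\vq)\in\Z^m\times\Z^n$, and for $t>0$ put
\[
m_A(t):=\min\big\{\|A\vq-\vp\|_{\va}\ :\ \vq\in\Z^n\nz,\ \|\vq\|_{\vb}<t,\ \vp\in\Z^m\big\},
\]
a minimum taken over a finite set and hence attained. By the form of \eqref{weightsystem}, that system with $\psi=\psi_1$ has a solution at a given $t$ if and only if $m_A(t)<1/t$, so $A\in D_{\infty,\vw}(\psi_1)$ is equivalent to $m_A(t)<1/t$ holding for all sufficiently large $t$. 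First I would settle a degenerate case: if there is $\vq_0\in\Z^n\nz$ with $A\vq_0\in\Z^m$, then $m_A(t)=0$ for every $t>\|\vq_0\|_{\vb}$ and we are done; so assume henceforth that no such $\vq_0$ exists.

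For the main step, fix $t>1$ and suppose, for a contradiction, that $m_A(t)\ge 1/t$. Consider the box
\[
B_t:=\prod_{i=1}^{m}\big[-t^{-\alpha_i},\,t^{-\alpha_i}\big]\ \times\ \prod_{j=1}^{n}\big[-t^{\beta_j},\,t^{\beta_j}\big]=\big\{(\vx,\vy)\ :\ \|\vx\|_{\va}\le 1/t,\ \|\vy\|_{\vb}\le t\big\},
\]
a compact, convex body symmetric about $\mathbf 0$ whose volume, because $\sum_i\alpha_i=\sum_j\beta_j=1$, equals $2^{m+n}\,t^{-\sum_i\alpha_i}\,t^{\sum_j\beta_j}=2^{d}$. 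The hypothesis $m_A(t)\ge 1/t$ says precisely that no $(\vp,\vq)$ with $\vq\ne\mathbf 0$ satisfies simultaneously $\|A\vq-\vp\|_{\va}<1/t$ and $\|\vq\|_{\vb}<t$; moreover, since $t>1$, the only point $(-\vp,\mathbf 0)\in\Lambda_A$ lying in the interior of $B_t$ is the origin. Hence $\operatorname{int}(B_t)\cap\Lambda_A=\{\mathbf 0\}$.

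Now I would apply the equality case of Minkowski's theorem: $B_t$ is a symmetric convex body of volume $2^{d}$, the lattice $\Lambda_A$ has covolume $1$, and $\operatorname{int}(B_t)$ contains no nonzero lattice point, so the translates $\{\vv+\tfrac12 B_t:\vv\in\Lambda_A\}$ tile $\R^{d}$. By Hajós's theorem on lattice tilings of $\R^d$ by cubes --- applied after the diagonal rescaling that sends each side of $B_t$ to unit length --- the lattice $\Lambda_A$ must contain a nonzero vector supported on a single coordinate axis, whose unique nonzero coordinate is either $t^{-\alpha_i}$ for some $i\le m$ or $t^{\beta_j}$ for some $j\le n$. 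In the former case $\Lambda_A$ would contain $(-\vp,\mathbf 0)$ with $|p_i|=t^{-\alpha_i}$, a nonzero integer of absolute value $<1$ --- impossible. In the latter case $\Lambda_A$ would contain $(\mathbf 0,\vq)$ with $\vq=\pm t^{\beta_j}\ve_j\in\Z^n\nz$ and $A\vq\in\Z^m$, contradicting our standing assumption. The resulting contradiction shows $m_A(t)<1/t$ for every $t>1$, which proves the theorem.

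The step I expect to be genuinely delicate is this last one: deducing the cube‑tiling conclusion from the fact that $\operatorname{int}(B_t)$ is free of nonzero lattice points --- that is, invoking the equality case of Minkowski's theorem and Hajós's theorem --- since this is exactly where the gap between the strict inequalities in \eqref{dirichletpsi} and the non‑strict inequalities that come out of Minkowski's Convex Body Theorem is bridged. The remaining ingredients --- the reduction to $m_A(t)<1/t$, the volume computation for $B_t$ (which is where the normalization $\sum_i\alpha_i=\sum_j\beta_j=1$ enters), and the verification that ``$A\vq_0\in\Z^m$ for some $\vq_0\ne\mathbf 0$'' is the correct degenerate case to isolate --- are all routine.
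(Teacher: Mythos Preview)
Your argument is correct, and at its heart it uses the same key ingredient as the paper's proof: Haj\'os's theorem. The difference is in the packaging. The paper proceeds dynamically: via Corollary~\ref{perfect-Dirichlet-dynamics}, failure of $A\in D_{\infty,\vw}(\psi_1)$ forces the trajectory $\{a_s\Lambda_A\}$ to visit the critical locus $\mathcal{L}_\infty$ along an unbounded sequence of times, and then the description \eqref{hajos} of $\mathcal{L}_\infty$ (which \emph{is} Haj\'os's theorem, stated in the space of lattices) shows that every lattice in $\mathcal{L}_\infty$ is forward- or backward-divergent, feeding into the general Proposition~\ref{perfect-Dirichlet-generalthm}. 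You instead stay inside $\R^d$: for each fixed $t$ the failure of the system produces a lattice cube-tiling via the equality case of Minkowski, and you invoke Haj\'os directly in its original cube-tiling formulation to extract an axis-parallel lattice vector, reaching the contradiction without passing through the dynamical correspondence.

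What each approach buys: your route is more elementary and self-contained for this particular statement --- no Dani correspondence, no auxiliary propositions --- and it even yields the slightly sharper conclusion $m_A(t)<1/t$ for \emph{every} $t>1$ in the non-degenerate case. The paper's route is built to be reusable: Proposition~\ref{perfect-Dirichlet-generalthm} is formulated so that the same divergence mechanism immediately proves Theorem~\ref{perfect-Dirichlet}(a),(b) and Proposition~\ref{perfect-Dirchlet-finite} for other norms once one knows the structure of their critical loci. Your direct computation does not generalize in the same way, since for a non-box norm there is no analogue of the ``single coordinate axis'' conclusion of Haj\'os.
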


As for the set $$\DI_{\infty,\vw} :=\bigcup_{0<c<1}D_{\infty,\vw}(c\psi_1),$$ the fact that it has Lebesgue measure zero was established by the first named author and Weiss using the correspondence between \da\ and dynamics, see \cite[Theorem 1.4]{KW}. In this paper we prove

 \begin{thm}\label{thickness-Dirichlet-sup}
For 
any choice of weights $\vw$,  the set $\DI_{\infty,\vw}$ contains the set $\BA_\vw$ of $\vw$-badly approximable matrices, defined by
\begin{equation}\label{wBA} \BA_\vw := \left\{\text{\amr}:   \inf_{\vp \in \Z^m,\, \vq \in \Z^n\smallsetminus \{{\bf 0}\}} \|A\vq -\vp \|_\va  \|\vq\|_\vb > 0 \right\}.
\end{equation}
\end{thm}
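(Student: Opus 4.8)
The plan is to reformulate $\psi_1$-improvability in terms of best approximations and then run a weighted analogue of the Davenport--Schmidt argument. Write $u_A=\left(\begin{smallmatrix} I_m & A\\ 0 & I_n\end{smallmatrix}\right)$, $\Lambda_A=u_A\Z^d$, and for $A\in\BA_\vw$ put $c_0:=\inf_{\vp,\vq}\|A\vq-\vp\|_\va\,\|\vq\|_\vb>0$. Consider the non-increasing function $L(t):=\min\{\|A\vq-\vp\|_\va:\vp\in\Z^m,\ \vq\in\Z^n\nz,\ \|\vq\|_\vb<t\}$, let $Q_1<Q_2<\cdots$ be the values of $\|\vq\|_\vb$ at which this minimum strictly decreases, and let $\vv_k=(A\vq_k-\vp_k,\vq_k)\in\Lambda_A$ be a corresponding minimal vector, with $Q_k=\|\vq_k\|_\vb$ and $L_k:=\|A\vq_k-\vp_k\|_\va$, so that $L\equiv L_k$ on $(Q_k,Q_{k+1}]$. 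Since $A\in D_{\infty,\vw}(c\psi_1)$ means precisely that $tL(t)<c$ for all large $t$, and $\sup_{t\in(Q_k,Q_{k+1}]}tL(t)=Q_{k+1}L_k$, this gives the reformulation
\[
A\in\DI_{\infty,\vw}\iff \limsup_{k\to\infty} Q_{k+1}L_k<1 .
\]
(Dynamically, $Q_{k+1}L_k$ is the maximal value on the $k$-th interval of $s\mapsto$ ``shortest weighted box-vector of $g_s\Lambda_A$'', and the statement is that a bounded $g_s$-trajectory stays eventually bounded away from the critical locus $\mathcal C=\{\Lambda:\Lambda\cap(-1,1)^d=\{\mathbf 0\}\}$; since $\mathcal C$ is itself compact, this needs genuine Diophantine input, which the rest of the argument supplies.)

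First I would record the elementary inequalities. Applying Theorem~\ref{perfect-Dirichlet-sup} at scales $t\uparrow Q_{k+1}$ gives $L_kQ_{k+1}\le 1$ for all large $k$; $\vw$-bad approximability applied to $\vq_k$ and to $\vq_{k+1}$ gives $L_kQ_k\ge c_0$ and $L_{k+1}Q_{k+1}\ge c_0$; combining these yields the bounded-ratio estimates
\[
Q_{k+1}/Q_k\le 1/c_0 \qquad\text{and}\qquad L_{k+1}/L_k\ge c_0 .
\]

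The crux is the quantitative improvement: there is $\delta=\delta(c_0,m,n)>0$ with $Q_{k+1}L_k\le 1-\delta$ for all large $k$. I would look at the quasi-norm box $R_k=\{(\vx,\vy):\|\vx\|_\va\le L_k,\ \|\vy\|_\vb\le Q_{k+1}\}$, which has volume $2^dL_kQ_{k+1}$ and whose interior meets $\Lambda_A$ only in $\mathbf 0$ (by minimality of the best approximations). The vectors $\pm\vv_k$ lie on the face $\{\|\vx\|_\va=L_k\}$ at relative $\vy$-depth $Q_k/Q_{k+1}\ge c_0$, and $\pm\vv_{k+1}$ lie on the face $\{\|\vy\|_\vb=Q_{k+1}\}$ at relative $\vx$-depth $L_{k+1}/L_k\ge c_0$: two independent primitive lattice vectors sitting well inside two complementary faces of an otherwise empty box. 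This should force $\operatorname{vol}(R_k)$ below $2^d$ by a definite amount. When $d=2$ it is immediate: $\{\vv_k,\vv_{k+1}\}$ is then a basis of $\Lambda_A$, so $\pm 1=\det(\vv_k,\vv_{k+1})$; since both $L_kQ_{k+1}$ and $Q_kL_{k+1}$ lie in $(0,1]$, this determinant cannot be their difference (that would make $|\det|<1$), hence $L_kQ_{k+1}+Q_kL_{k+1}=1$, and with $Q_kL_{k+1}\ge c_0^2\,Q_{k+1}L_k$ we get $Q_{k+1}L_k\le 1/(1+c_0^2)$. For general $d$ one runs the corresponding geometry-of-numbers estimate for the pair $(\Lambda_A,R_k)$ --- either via Minkowski's second theorem, using the depth bounds to make $\lambda_3\cdots\lambda_d$ exceed $1$ quantitatively, or via the combinatorial bookkeeping of Davenport--Schmidt --- to extract a volume deficit $\delta(c_0)>0$. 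Granting this, $\limsup_k Q_{k+1}L_k\le 1-\delta<1$, so $A\in D_{\infty,\vw}\bigl((1-\tfrac\delta2)\psi_1\bigr)\subseteq\DI_{\infty,\vw}$.

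The hard part is precisely this last estimate in full generality: in dimension two it is a one-line determinant computation, but for $d>2$ the vectors $\vv_k,\vv_{k+1}$ no longer span $\Lambda_A$, and one must use genuine geometry of numbers to convert ``two independent, faces-deep boundary vectors together with an empty interior'' into a uniform volume deficit depending only on $c_0$ (and $m,n$). Everything else --- the reformulation, the elementary inequalities, and the passage back to $\DI_{\infty,\vw}$ --- is routine, the only mild care being the failure of the triangle inequality for the quasi-norms $\|\cdot\|_\va$ and $\|\cdot\|_\vb$, which costs only dimensional constants.
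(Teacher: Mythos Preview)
Your approach via best approximations is quite different from the paper's, and while the $d=2$ case is clean, the general case has a genuine gap that you yourself flag with ``Granting this''. Neither of the two remedies you propose actually closes it. The Minkowski second theorem route fails: from an empty interior you get $\lambda_1\ge 1$, and the two boundary vectors $\vv_k,\vv_{k+1}$ give at best $\lambda_1=\lambda_2=1$, but there is no mechanism forcing $\lambda_3\cdots\lambda_d$ to exceed $1$ by a definite amount --- the depth parameters $Q_k/Q_{k+1}$ and $L_{k+1}/L_k$ control where $\vv_k,\vv_{k+1}$ sit on the boundary, not the higher minima. (Indeed $\Z^d$ with the unit cube already has all $\lambda_i=1$.) As for ``the combinatorial bookkeeping of Davenport--Schmidt'': their argument does not proceed by extracting a volume deficit from two deep boundary vectors. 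The decisive step in their proof is a structural lemma stating that a lattice admitting a box of volume $2^d$ with empty interior must, up to coordinate permutation, be upper-triangular unipotent --- this is Haj\'os' theorem --- and it is \emph{this} structure, not a quantitative packing bound, that conflicts with bad approximability. So the missing step in your outline, once supplied, is essentially Haj\'os' theorem.

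The paper takes the dynamical route directly and avoids best approximations altogether. Via Dani's correspondence, $A\in\BA_\vw$ means $\{a_s\Lambda_A:s>0\}$ is bounded, while $A\notin\DI_{\infty,\vw}$ means $a_{s_k}\Lambda_A$ accumulates on the critical locus $\mathcal L_\infty$. The paper then proves a general criterion (Proposition~\ref{topology-DS}): if every $\Lambda\in\mathcal L_\nu$ has an unbounded $a_s$-orbit, then $\BA_\vw\subset\DI_{\nu,\vw}$. For $\nu=\|\cdot\|_\infty$ this hypothesis is verified by Haj\'os' theorem, which gives $\mathcal L_\infty=\bigcup_w wBw\SL_d(\Z)$; every such lattice contains a standard basis vector $\ve_i$, which $a_s$ contracts either forward or backward, so the orbit diverges. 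This argument is short, works uniformly in $m,n,\vw$, and isolates exactly the structural input (Haj\'os) that your outline is implicitly reaching for.
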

Note that the latter set is thick, as shown  in \cite[\S 4.5]{KW}, see also
\cite{PV} and \cite{KW2}.
It should also be noted that in \cite[Theorem 4.6]{Suess}, Su\"ess proved the above result in the case when $m=1$.
Our proof here is different and is written in the language of dynamics on the space of lattices.

We remark that the problem of determining conditions on $\psi$ under which the set $D_{\infty,\vw}(\psi)$ has zero/full measure is rather tricky. A complete solution for the case $m=n=1$ is given in \cite{kw1}, and a   recent paper \cite{KSY} by the first named author, Strombergsson and Yu  deals with the general case, including arbitrary weights, and provides a partial result.

 In order to generalize the set-up further, let us restate the definition of $(\psi, \vw)$-{Dirichlet} matrices in a geometric language. 
Let $X_d$ denote the space of unimodular lattices in $\R^d$, identified with $\SL_d(\R)/\SL_d(\Z)$ via $g \mapsto g\Z^d$.
Given 
 \amr, we define
\begin{equation*}
 u_A := \left[ {\begin{array}{cc}
   I_m & A \\
   0 & I_n \\
  \end{array} } \right], \ \ \Lambda_A := u_A\Z^d.
  \end{equation*}
Then it is easy to see that $A\in D_\infty(\psi)$ if and only if \begin{equation}\label{geometric-dirichlet}
    \Lambda_A \cap  \left[ {\begin{array}{cc}
   \psi(t)^{1/m}I_m  & 0 \\
   0 & t^{1/n}I_n  \\
  \end{array} } \right]
  B_\infty(1) \ne \{{\bf 0}\}
\end{equation}
for all sufficiently large $t$ (here $B_\infty(1)$ is the unit open ball centered at zero with respect to the norm $\|\cdot\|_\infty$)
And for a weighted version it will be convenient to use the following notation for a number raised to a vector power: if $c >0$ and $\x\in\R^k$, define
\begin{equation*}
   c^\x := \diag(
    c^{x_1}, \dots , c^{x_k}).
  \end{equation*}
Then, similarly to \eqref{geometric-dirichlet}, one can state that $A\in D_{\infty, \vw}(\psi)$ if and only if \begin{equation}\label{geometric-dirichlet-weights}
    \Lambda_A \cap  \left[ {\begin{array}{cc}
   \psi(t)^{\va}  & 0 \\
   0 & t^{\vb}  \\
  \end{array} } \right]
  B_\infty(1) \ne \{{\bf 0}\}
\end{equation}
for all sufficiently large $t$.

At this point one might wonder: what will change if in the above definition the supremum norm $\|\cdot\|_\infty$ is replaced by some other norm $\nu$? and indeed this type of questions have appeared in the literature, first for the case $m=n=1$ \cite{AD}, and then for arbitrary $m,n$ in the unweighted case \cite{KR}.
We will now use \eqref{geometric-dirichlet-weights} to state a general weighted definition. In order to do that, for an arbitrary norm $\nu$ on $\R^d$ let us define  
  the \textit{critical radius} of   $\nu$ as follows:
 $$
 r_\nu := \sup\big\{r:
 \Lambda\cap B_\nu(r) = \{{\bf 0}\}
  \text{ for some }\Lambda\in X_d
  \big\}. $$
  Here $B_\nu(r) := \{\x\in\R^d: \nu(\x) < r\}$; clearly $r_\infty = 1$. 
  (Throughout the paper we will use the notation $p$ when $\nu$ is the  $\ell^p$ norm, in particular when $p=\infty$.)

  
Now let us define the most general sets of $\psi$-Dirichlet matrices.

\begin{defn}
Given a
function $\psi : \R_{>0} \to \R_{>0}$ and a tuple of weights $\vw = (\va,\vb)$ as in \eqref{weights},
we say that \amr\ is $(\psi, \nu,\vw)$-\textit{Dirichlet} if 
\begin{equation*}\label{weighted-dirichlet}
    \Lambda_A \cap  \left[ {\begin{array}{cc}
   \psi(t)^\va  & 0 \\
   0 & t^\vb  \\
  \end{array} } \right]
  B_\nu (r_{\nu}) \ne \{{\bf 0}\}
\end{equation*}
for all sufficiently large $t$.
\end{defn}
For brevity, we write the set of $(\psi,\nu,\vw)$-Dirichlet matrices as $D_{\nu, \vw}(\psi)$. 
Note that the above property in general cannot be written in a way similar to \eqref{weightsystem}, with separate conditions involving the linear forms $A_i$ and the variables $q_j$. 
{For example, in the case where $m=n=1$, $\nu$ is the Euclidean norm on $\R^2$ and $\vw = (1,1)$ is the only possible choice for the weights, it is easy to see that $r_2= \left(\frac{4}{3}\right)^{1/4}$.
The corresponding condition for a real number $\alpha$ to be $(\psi,\nu,\vw)$-Dirichlet is that the inequality
\begin{equation*}
    \left(\frac{\alpha q -p}{\psi(t)}\right)^2 + \left(\frac{q}{t}\right)^2 < \frac{2}{\sqrt{3}}
\end{equation*}
has a solution in $(p,q)\in \Z\times \N$ for all sufficiently large $t$.}
 
It immediately follows from the definition of $r_\nu$ that $D_{\nu,\vw}(c\psi_{1})= \mr$ for any $c>1$. Also one can define
$$\DI_{\nu,\vw} :=\bigcup_{0<c<1}D_{\nu, \vw}(c\psi_1),$$ 
the set of \textit{weighted Dirichlet-improvable matrices with respect to $\nu$}, and use the same dynamical argument as in \cite[Theorem 1.4]{KW} to prove 

\begin{thm}\label{weighted-dirichlet-thm}
For any choice of a norm $\nu$ on $\R^d$ and a weight vector $\vw$, the set $\DI_{\nu,\vw}$ has Lebesgue measure zero.
\end{thm}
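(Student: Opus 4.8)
The plan is to follow the Dani correspondence between uniform Diophantine approximation and bounded orbits in the space of lattices, exactly as in \cite[Theorem 1.4]{KW}, and then invoke a non-divergence / measure-zero argument for the relevant one-parameter flow. First I would record the dynamical reformulation of membership in $D_{\nu,\vw}(c\psi_1)$. Let $g_s := \diag(e^{s\alpha_1},\dots,e^{s\alpha_m},e^{-s\beta_1},\dots,e^{-s\beta_n})$ be the one-parameter diagonal subgroup determined by the weights $\vw$; note $g_s$ is unimodular since $\sum \alpha_i = \sum \beta_j = 1$. Using the substitution $t = e^{ns}$ (so that $t^{\vb} = $ the lower block of $g_{-s}$ up to scalar, and $(c/t)^{\va}$ is the upper block scaled by $c^{\va}$), one checks that $A \in D_{\nu,\vw}(c\psi_1)$ if and only if the trajectory $\{g_s \Lambda_A : s \ge 0\}$ eventually stays inside the closed set $K_{\nu,c} := \{\Lambda \in X_d : \Lambda \cap c^{1/d}\,B_\nu(r_\nu) \ne \{\mathbf 0\}\}$ (here $c^{1/d}$ absorbs the fact that $\diag(c^{\va},1,\dots,1)$ has determinant $c$; the precise scalar is immaterial). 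Since $c < 1$, the set $K_{\nu,c}$ is a \emph{proper} closed subset of $X_d$ whose complement $U_{\nu,c} := X_d \smallsetminus K_{\nu,c}$ is open and nonempty — indeed, by the very definition of the critical radius $r_\nu$, there exists a lattice $\Lambda$ with $\Lambda \cap B_\nu(r_\nu) = \{\mathbf 0\}$, and by continuity (openness of the condition for closed balls of slightly smaller radius) $U_{\nu,c}$ is nonempty for every $c<1$; moreover $U_{\nu,c}$ increases to a set of full measure as $c \to 0$ by Minkowski, but we will not need that.

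Next I would translate "full measure of $\DI_{\nu,\vw}$ would be contradicted" into an ergodic-theoretic statement. Write $\DI_{\nu,\vw} = \bigcup_{k} D_{\nu,\vw}((1-1/k)\psi_1)$, so by countable subadditivity it suffices to show each $D_{\nu,\vw}(c\psi_1)$, $c<1$, is Lebesgue-null in $\mr$. Pushing forward Lebesgue measure on $\mr$ under $A \mapsto \Lambda_A$ gives a measure on the expanding horospherical piece $\{u_A : A \in \mr\}$, and the statement to prove becomes: for Haar-almost every point $x$ of $X_d$ (equivalently, for Lebesgue-a.e.\ $A$, by the standard absolute-continuity argument using that the conditional measures of Haar along the unstable horospherical leaves are Lebesgue — see \cite[\S 4]{KW}), the forward orbit $\{g_s x\}$ enters $U_{\nu,c}$ for an unbounded set of times $s$. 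This is immediate from ergodicity: the flow $(g_s)$ is ergodic on $X_d$ with respect to the (finite) Haar measure $\mu_{X_d}$ — it is a one-parameter $\R$-diagonalizable subgroup of $\SL_d(\R)$, and the Howe–Moore theorem gives mixing, hence ergodicity — and $U_{\nu,c}$ has positive measure, so for a.e.\ $x$ the orbit spends a positive proportion of time in $U_{\nu,c}$, in particular returns infinitely often. Thus $g_s x \in K_{\nu,c}$ fails for arbitrarily large $s$, so $x \notin$ (the dynamical set corresponding to) $D_{\nu,\vw}(c\psi_1)$, for a.e.\ $x$.

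The one genuinely non-routine point — and the reason \cite{KW} is cited rather than quoted verbatim — is the passage from "Haar-a.e.\ $x \in X_d$" to "Lebesgue-a.e.\ $A \in \mr$." The set $\{\Lambda_A\}$ is a single unstable horospherical orbit, which has Haar measure zero, so one cannot directly apply the a.e.\ statement. The standard fix: disintegrate $\mu_{X_d}$ along the orbits of the expanding horospherical subgroup $H^+$ of $(g_s)$; the conditional measures are (locally) proportional to Lebesgue measure on $H^+ \cong \mr$. Since the "bad" set $B := \{x : g_s x \in K_{\nu,c} \ \forall\, s \ge s_0 \text{ for some } s_0\}$ is $\mu_{X_d}$-null by the previous paragraph and is a tail event that is $g_s$-quasi-invariant, a Fubini/Marstrand-type argument (carried out in detail in \cite[Theorem 1.4]{KW}) shows its intersection with almost every $H^+$-leaf — in particular with the leaf through the identity, which is $\{\Lambda_A : A \in \mr\}$ — is Lebesgue-null. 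Since $K_{\nu,c}$ and the flow $g_s$ depend on $\nu$ and $\vw$ only through a compact set and a fixed unimodular one-parameter subgroup, every ingredient (ergodicity of $g_s$, positivity of $\mu_{X_d}(U_{\nu,c})$, the horospherical disintegration) goes through with no change from the supremum-norm case, which is precisely why the same argument applies. Hence $D_{\nu,\vw}(c\psi_1)$ is Lebesgue-null for each $c < 1$, and therefore so is the countable union $\DI_{\nu,\vw}$. $\qed$
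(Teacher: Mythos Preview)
Your overall strategy (Dani correspondence, then show that for a.e.\ $A$ the forward $a_s$-orbit of $\Lambda_A$ visits the compact set $\mathcal{K}_\nu(r)$ for arbitrarily large $s$) matches the paper's, but the execution of the ``a.e.'' step has a genuine gap. You argue: (i) by Howe--Moore the bad set $B\subset X_d$ is Haar-null; (ii) disintegrate Haar along $H^+$-leaves to conclude $B$ meets \emph{almost every} leaf in a null set; (iii) ``in particular'' the leaf through $\Z^d$. Step (iii) is a non-sequitur --- ``a.e.\ leaf'' never gives you one specific leaf. You cannot repair this by symmetry: the tail set $B$ is $a_s$-invariant, but it is \emph{not} invariant under $H^-$ or under the centraliser of $F$ (for $m$ in the centraliser, $a_s m x = m a_s x$ lands in $mK_{\nu,c}$, not $K_{\nu,c}$), so there is no group of symmetries acting transitively on the $H^+$-leaves and preserving $B$. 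Your citation of \cite[Theorem 1.4]{KW} for this step is also somewhat circular: that theorem \emph{is} the measure-zero statement (for the supremum norm), and its proof does not proceed via a Fubini/Marstrand argument but via equidistribution.

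The paper's proof sidesteps the transfer entirely by invoking the equidistribution theorem for expanding translates (Theorem~\ref{Kleinbock-Weiss-thm}, i.e.\ \cite[Theorem~2.2]{KW}), which is already a statement about Lebesgue measure on $\mr$. One fixes $c<1$, takes the associated $r<r_\nu$, supposes $B_i:=\bigcap_{s>i}\{A: a_s\Lambda_A\notin\mathcal{K}_\nu(r)\}$ has positive Lebesgue measure, picks a nonnegative $f\in C_c(X_d)$ supported in $\mathcal{K}_\nu(r)$ with $\int f\,d\mu>0$, and gets an immediate contradiction: the left side of Theorem~\ref{Kleinbock-Weiss-thm} vanishes for $s>i$ while the right side is positive. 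In effect, equidistribution of $H^+$-pieces is precisely the tool that singles out the \emph{specific} leaf $\{\Lambda_A\}$; once you use it, the preliminary Howe--Moore step is redundant. (A minor side issue: your $U_{\nu,c}$ equals $\mathcal{K}_\nu(c^{1/d}r_\nu)$ in the paper's notation, which is closed rather than open; it does have nonempty interior, which is what you actually need for positive Haar measure.)
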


We are thus left with the following two problems:
\begin{itemize}
\item[1.] Find norms $\nu$ and weight vectors $\vw$ such that
\eq{full}{D_{\nu, \vw}(\psi_1)^c = \varnothing.}
\item[2.] Find  norms $\nu$ and weight vectors $\vw$ such that \eq{thick}{\DI_{\nu,\vw}\text{ is thick}.}
\end{itemize}

Both problems will be addressed in this paper for some specific choices of norms $\nu$, using a dynamical restatement of the property of being $(\psi,\nu,\vw)$-Dirichlet. 
The choice of norms in the theorems below arise from what is known or can be proved regarding the densest lattice-packings of their unit balls. This will be made abundantly clear in the proofs.

With regards to problem $1$ above, we have, like Theorem \ref{perfect-Dirichlet-sup}, a precise form of Dirichlet theorem in the following additional cases.
\begin{thm}\label{perfect-Dirichlet}
We have that $D_{\nu, \vw}(\psi_1) = \mr$
\begin{enumerate}
    \item[(a)] when $m=n=1$ and $\nu$ is any $\ell^p$ norm on $\R^{2}$;
    \item[(b)] when $m=2$, $n=1$, $\vw$ is arbitrary, and $\nu$ on $\R^3$ is of the form 
    \eq{cyl}{(x,y,z) \mapsto \max\big\{\eta(x,y), |z|\big\}\text{ for some norm }\eta\text{ on }\R^2.}
\end{enumerate}
\end{thm}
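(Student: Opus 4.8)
The plan is to pass to the dynamical picture and then exploit the geometry of optimal lattice packings of the relevant unit balls. Let $a_s\in\SL_d(\R)$ be the diagonal matrix with diagonal entries $e^{s\alpha_1},\dots,e^{s\alpha_m},e^{-s\beta_1},\dots,e^{-s\beta_n}$. Taking $t=e^s$ and recalling $\psi_1(t)=1/t$, the matrix $\diag\big(\psi_1(t)^{\va},t^{\vb}\big)$ is exactly $a_s^{-1}$, so \amr\ lies in $D_{\nu,\vw}(\psi_1)$ if and only if $a_s\Lambda_A\cap B_\nu(r_\nu)\ne\{{\bf 0}\}$ for all large $s$; equivalently, the forward orbit $\{a_s\Lambda_A\}$ is eventually disjoint from the critical locus
\[
\mathcal K_\nu:=\big\{\Lambda\in X_d:\Lambda\cap B_\nu(r_\nu)=\{{\bf 0}\}\big\}.
\]
Since $B_\nu(r_\nu)$ is open, $\mathcal K_\nu$ is closed, and since each of its elements has $\nu$-shortest vector of length at least $r_\nu$ it is bounded, hence compact by Mahler's criterion. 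Thus $D_{\nu,\vw}(\psi_1)=\mr$ is equivalent to the assertion that \emph{no} trajectory $a_s\Lambda_A$ returns to $\mathcal K_\nu$ for an unbounded set of times $s$. I would argue by contradiction: if it did, pick $s_k\to\infty$ with $a_{s_k}\Lambda_A\in\mathcal K_\nu$, pass to a subsequential limit $a_{s_k}\Lambda_A\to\Lambda_*\in\mathcal K_\nu$, and derive a contradiction from the explicit structure of critical lattices for the norms in (a) and (b), using the very special form $a_s u_A\Z^d$ of the points on the trajectory.

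For part (a) we have $d=2$, the only admissible weight is $\vw=\big((1),(1)\big)$, $a_s=\diag(e^s,e^{-s})$ is the geodesic flow, and $\Lambda_A=\left(\begin{smallmatrix}1&\alpha\\0&1\end{smallmatrix}\right)\Z^2$ for $A=(\alpha)$. Here $\mathcal K_p$ is the set of critical lattices of the planar $\ell^p$-disk, which is classically understood: for $p\in\{1,\infty\}$ the $\ell^p$-disk is a square and $\mathcal K_p$ is the image of $\mathcal K_\infty$ under a fixed linear map, so the claim follows from (the $m=n=1$ case of) Theorem~\ref{perfect-Dirichlet-sup}; for $p=2$ it is the $\SO(2)$-orbit of the hexagonal lattice; and for the remaining $p$ it is a finite set, pinned down by the known value of the critical determinant of the $\ell^p$-disk. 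In all cases the plan is to make the condition $a_s\Lambda_\alpha\in\mathcal K_p$ completely explicit in terms of the continued fraction $[a_0;a_1,a_2,\dots]$ of $\alpha$: writing $p_k/q_k$ for the convergents and $\delta_k:=|q_k\alpha-p_k|$, the trajectory contains the lattice vectors $(e^s\delta_k,\,e^{-s}q_k)$ and analogous ones coming from the semiconvergents, and one uses the identity $q_{k+1}\delta_k+q_k\delta_{k+1}=1$ to show that for every $\alpha$ and every sufficiently large $s$ at least one such vector lies strictly inside $B_p(r_p)$. For $p=2$ this reproves the statement behind \cite{AD}; for the other $p$ it is the same computation carried out for the $\ell^p$-disk of the appropriate critical radius.

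For part (b) we have $d=3$, $m=2$, $n=1$, $a_s=\diag(e^{s\alpha_1},e^{s\alpha_2},e^{-s})$, $A$ the column vector $(a_1,a_2)$, and $B_\nu(r)=B_\eta(r)\times(-r,r)$. A lattice vector of $\Lambda_A$ has the form $u_A(x_1,x_2,x_3)=(x_1+a_1x_3,\,x_2+a_2x_3,\,x_3)$, so the contracting third coordinate makes the constraint $|e^{-s}x_3|<r_\nu$ automatic for all bounded $x_3$; consequently $A\in D_{\nu,\vw}(\psi_1)$ reduces to the statement that for all large $s$ there is an integer $c$ with $0<|c|<r_\nu e^s$ and
\[
\min_{(x_1,x_2)\in\Z^2}\eta\big(e^{s\alpha_1}(ca_1-x_1),\,e^{s\alpha_2}(ca_2-x_2)\big)<r_\nu .
\]
This is a two-dimensional, $\eta$-weighted simultaneous Dirichlet statement for $(a_1,a_2)$, and I would attack it just as in part (a): describe the critical locus of the solid cylinder $B_\eta(1)\times[-1,1]$ --- built by placing a critical lattice of $\eta$ in a horizontal layer and stacking translates with an optimally chosen vertical period, possibly with a horizontal shift into a deepest hole of that lattice --- and then, using the corresponding explicit approximation input for $(a_1,a_2)$, show that the trajectory $a_s\Lambda_A$ cannot keep returning to the vicinity of such a configuration.

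The step I expect to be the main obstacle is the explicit estimate in part (a) for the \emph{extremal} numbers $\alpha$, namely those whose continued fraction contains arbitrarily large partial quotients arranged so that some best-approximation vector hovers arbitrarily close to the boundary sphere $\{\nu=r_p\}$. This is precisely the ``strict versus non-strict'' delicacy already visible in Theorem~\ref{perfect-Dirichlet-sup} and in the Euclidean case \cite{AD}, now to be handled uniformly across all $p\in[1,\infty]$, and it is where the exact value of $r_p$ enters. In part (b) the corresponding difficulty is to determine the critical locus of the cylinder for an \emph{arbitrary} planar norm $\eta$ --- in particular deciding when straight stacking is already optimal and when a horizontal shift strictly improves it --- since the subsequent dynamical argument relies on knowing the precise shape of that locus.
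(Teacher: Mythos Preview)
Your dynamical reduction is exactly the paper's Corollary~\ref{perfect-Dirichlet-dynamics}: $A\notin D_{\nu,\vw}(\psi_1)$ iff $a_s\Lambda_A$ visits $\mathcal{L}_\nu$ for an unbounded set of $s>0$. But from there you head into explicit computations --- continued fractions for (a), a full description of the cylinder's critical locus for (b) --- and the obstacles you flag at the end are real for \emph{that} route. The paper sidesteps them entirely with two short abstract arguments.

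For (a), once you note (as you do) that for $p\notin\{2,\infty\}$ the locus $\mathcal{L}_p$ is \emph{finite} (by \cite{GGM}), no continued fractions are needed: if $a_{s_k}\Lambda_A$ returns to a finite set for $s_k\to\infty$, by pigeonhole some point is hit twice, making the orbit periodic; but $\Lambda_A=u_A\Z^2$ contains $\ve_2$, which $a_s$ contracts as $s\to-\infty$, so the orbit is backward divergent, a contradiction. That is the whole proof (Proposition~\ref{perfect-Dirchlet-finite}). The cases $p=2$ and $p=\infty$ are handled by citation and by Theorem~\ref{perfect-Dirichlet-sup}.

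For (b) the gap is more serious: determining the \emph{exact} critical locus of $B_\eta(1)\times(-1,1)$ for arbitrary $\eta$ is not known and is not needed. The paper instead uses only a \emph{containment}: by \cite[Proposition~5.1]{kr}, every critical lattice for a cylinder norm lies in $\mathcal{Z}_1\cup\mathcal{Z}_2$, where each $\Lambda\in\mathcal{Z}_1$ contains a nonzero vector along $\ve_3$ (contracted by $a_s$ as $s\to+\infty$) and each $\Lambda\in\mathcal{Z}_2$ contains one in $\operatorname{span}\{\ve_1,\ve_2\}$ (contracted as $s\to-\infty$). The general principle (Proposition~\ref{perfect-Dirichlet-generalthm}) then finishes: if $a_{s_k}\Lambda_A\in\mathcal{L}_\nu$ for $s_k\to\infty$, pass to a subsequence landing in a single $\mathcal{Z}_i$; by compactness there is a uniform $t_0$ with $a_{\pm t}\mathcal{Z}_i\cap\mathcal{L}_\nu=\varnothing$ for $|t|>t_0$, contradicting $a_{s_k-s_1}(a_{s_1}\Lambda_A)\in\mathcal{L}_\nu$ (forward case) or $a_{s_1-s_k}(a_{s_k}\Lambda_A)\in\mathcal{L}_\nu$ (backward case). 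No approximation input for $(a_1,a_2)$ and no stacking analysis is required.

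In short: keep your dynamical setup, but replace the explicit case-by-case analysis with the observation that it suffices for every critical lattice to be either forward or backward divergent under $a_s$ --- and for the norms in question this follows from structural information about $\mathcal{L}_\nu$ that is much weaker than a full description.
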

For Problem $2$, when $m=n=1$ and with only one possible choice of weights,
the thickness result was established in \cite[Theorem 1.3]{KR}.
For the {unweighted} case of the Euclidean norm in arbitrary dimensions it was established in \cite[Theorem 3.7]{KR}.
The result for the weighted supremum norm in arbitrary dimension follows from Theorem \ref{thickness-Dirichlet-sup}.
Presently we prove
\begin{thm}\label{thickness-Dirichlet}
The set $\mathbf{DI}_{\nu,\vw}$ is thick 
\begin{enumerate}
    \item[(a)] for any $m,n, \vw$, and when $\nu$ is the Euclidean norm on $\R^d$;
    \item[(b)] when $m=2$, $n=1$, $\vw$ is arbitrary, and $\nu$ on $\R^3$ is of the form 
    \equ{cyl}.
\end{enumerate}
\end{thm}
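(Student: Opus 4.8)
The plan is to use the standard translation between Dirichlet-improvability and trajectories in $X_d = \SL_d(\R)/\SL_d(\Z)$ under the one-parameter diagonal subgroup adapted to the weights. For $t = e^s$ set
\[
g_s := \diag\big(e^{s\alpha_1},\dots,e^{s\alpha_m},e^{-s\beta_1},\dots,e^{-s\beta_n}\big),
\]
so that the matrix $\left[\begin{smallmatrix}\psi(t)^\va & 0\\ 0 & t^\vb\end{smallmatrix}\right]^{-1}$ applied to $\Lambda_A$ equals, up to the $\psi(t)$-dependent diagonal factor on the first block, the lattice $g_{-s}\Lambda_A$. Concretely, for $\psi = c\psi_1$ the condition defining $D_{\nu,\vw}(c\psi_1)^c$ becomes: there is an unbounded set of $s$ for which $g_{-s}\Lambda_A$ avoids a certain fixed (up to the scaling by powers of $c$) compact neighbourhood of $\mathbf 0$, i.e.\ $g_{-s}\Lambda_A$ comes within $\nu$-distance comparable to $r_\nu$ of no nonzero lattice point. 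The key point, already implicit in the Introduction and in \cite{KW, KR}, is that $A\in \DI_{\nu,\vw}$ if and only if the forward orbit $\{g_{-s}\Lambda_A : s\ge 0\}$ eventually stays outside the \emph{critical locus} $K_\nu := \{\Lambda \in X_d : \Lambda\cap B_\nu(r_\nu) = \{\mathbf 0\}\}$, equivalently eventually enters and stays in an open set whose closure misses $K_\nu$. Since $K_\nu$ is compact (this is where the hypotheses on $\nu$ enter: for the Euclidean norm and for the cylindrical norms \equ{cyl}, the densest-lattice-packing configurations are understood well enough that $K_\nu$ is a genuine compact set bounded away from the cusp), the complement of any neighbourhood of $K_\nu$ contains points arbitrarily deep in the cusp as well as a fixed compact part, and one wants to force the trajectory into a bounded region disjoint from $K_\nu$.

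The strategy to produce thickness is then a Schmidt-game / Cantor-set construction on the fibre $\{u_A : A\in\mr\}$, exactly as in \cite[\S3]{KR} and \cite[\S4.5]{KW}. First I would fix a compact set $\mathcal C \subset X_d \smallsetminus K_\nu$ which is $g_s$-invariant-friendly in the sense that it is a neighbourhood of some closed $g_s$-orbit or, more robustly, such that $\{A : g_{-s}\Lambda_A \in \mathcal C \text{ for all large }s\}$ is nonempty and in fact large; badly approximable matrices (with respect to the relevant weighted quasi-norm) give points whose $g_{-s}$-orbit is bounded, and one shows a bounded orbit that additionally avoids a small neighbourhood of $K_\nu$ can be obtained. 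Second, I would run the weighted variant of Schmidt's game (the ``mixing'' or ``hyperplane absolute game'' version used in \cite{KW2, KR}) on the parameter space $\mr$, where the target to avoid is $\{A : \text{the orbit of }\Lambda_A \text{ enters a shrinking sequence of neighbourhoods of }K_\nu\}$; winning sets of such games are thick, and intersect every open subset of $\mr$ in a set of full Hausdorff dimension, which is precisely \equ{thick}. The role of the two cases (a) and (b) is only to guarantee that $K_\nu$ is compact and that its complement contains the bounded sets on which the game can be won: for the Euclidean norm this is classical (Minkowski–Hlawka plus the fact that near-optimal sphere packings form a compact family), and for the cylindrical norm \equ{cyl} on $\R^3$ the packing problem decouples into a planar packing of the unit ball of $\eta$ and an interval, so $K_\nu$ is again explicitly a compact set.

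A cleaner route for part (b), which I would pursue first, exploits the product structure of the norm \equ{cyl} directly: with $m=2$, $n=1$, the Dirichlet condition with the cylindrical norm $\max\{\eta(x,y),|z|\}$ factors as a condition on $(\eta(A_1\vq-p_1, A_2\vq - p_2))$ together with the usual condition $|q|<t^{\beta}$, so being $(\psi,\nu,\vw)$-Dirichlet is equivalent to a \emph{two-dimensional} weighted Dirichlet statement for the $2\times 1$ system $(A_1,A_2)^{\!\top}$ measured in the norm $\eta$. One can then hope to reduce to, or mimic, the Euclidean-norm argument of \cite[Theorem 3.7]{KR} applied in $\R^3$ but with the cross-section governed by $\eta$, again via a Schmidt game in the two parameters $(A_1,A_2)$. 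The main obstacle I anticipate is verifying that the compact set $X_d \smallsetminus (\text{nbhd of }K_\nu)$ is ``$g_s$-thick'' enough to support a winning strategy — i.e.\ that one can always make a move keeping the trajectory both bounded and bounded away from $K_\nu$ — because unlike the supremum-norm case, where $K_\infty$ consists of lattices containing a point on the boundary of the standard cube and the geometry is combinatorially simple, here $K_\nu$ is cut out by the inhomogeneous minima of general convex bodies and controlling the local structure of its complement under the diagonal flow requires the packing input; making this quantitative is the crux. The remainder — that the game is won on every ball and that winning implies thick — is by now routine and can be cited.
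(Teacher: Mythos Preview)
Your dynamical translation is correct in outline, but there is a conceptual misstep that propagates through the proposal: the critical locus $\mathcal{L}_\nu$ is compact for \emph{every} norm $\nu$, by Mahler's criterion, so this cannot be where the hypotheses on $\nu$ enter. What is special about the Euclidean and cylindrical cases is the \emph{structure} of $\mathcal{L}_\nu$, and the two parts exploit it in entirely different ways. For (b) the paper does not run a Schmidt game at all: for cylindrical $\nu$ on $\R^3$, every $\Lambda \in \mathcal{L}_\nu$ contains a nonzero vector either on the $z$-axis or in the plane $\{z=0\}$ (this is \eqref{cylinderical-locus}), and hence has a \emph{divergent} $a_s$-orbit in one time direction; since any $A \in \BA_\vw$ gives a \emph{bounded} forward orbit, a short continuity argument (Proposition~\ref{topology-DS}) shows such an orbit cannot accumulate on $\mathcal{L}_\nu$, so $\BA_\vw \subset \DI_{\nu,\vw}$ and thickness follows from thickness of $\BA_\vw$. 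Your ``decoupling'' route does not work as stated: the cylindrical norm does not reduce the $(\psi,\nu,\vw)$-Dirichlet condition to a two-dimensional one.

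For (a) the paper does invoke a HAW result, but the input you are missing is the \emph{transversality criterion} of An--Guan--Kleinbock \cite{AGK}: if $Z \subset X_d$ is a compact $C^1$ submanifold with $T_z(Fz) \not\subset T_z Z$ and $T_z(H^{max}z) \not\subset T_z Z \oplus T_z(Fz)$ at every $z$, then the set of $A$ whose forward $a_s$-orbit eventually avoids $Z$ is HAW. The Euclidean input is that $\mathcal{L}_2$ is a finite union of $\SO(d)$-orbits; since $\mathfrak{so}(d)$ consists of skew-symmetric matrices while $F$ is diagonal and $\mathfrak{h}^{max}$ lies in the strictly upper-triangular block, transversality is a one-line check. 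Framing the obstacle as the complement being ``$g_s$-thick enough to support a winning strategy'' is off target: what is needed is a differential-geometric condition on $\mathcal{L}_\nu$ itself, not a largeness condition on its complement, and without identifying this ingredient the Schmidt-game approach does not get off the ground.
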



Theorems \ref{perfect-Dirichlet} and \ref{thickness-Dirichlet} can be proved for certain other norms as well. See Proposition \ref{perfect-Dirchlet-finite} and Corollary \ref{thickness-finite} below for general   results applicable to other norms.

One might also ask whether or not the inclusion \eq{incl}{\BA_\vw\subset \DI_{{\nu,}\vw}} holds for some norms $\nu$ other than $\|\cdot\|_\infty$.  In Proposition \ref{topology-DS} we give a condition sufficient for  \equ{incl}, which in particular is valid for norms  of the form 
    \equ{cyl} as in Theorem \ref{thickness-Dirichlet}(b). However in general \equ{incl} is false: in fact for any $A\in \BA_\vw$ one can find a norm $\nu$ such that $A\notin \DI_{\nu,\vw}$. Moreover, the same holds for any \amr\ except for the case when $A$ is $\vw$-singular, or $A\in \Sing_{\vw}$. The latter set is defined as 
    $$
    \Sing_{\vw} := \bigcap_{0<c<1}D_{\nu, \vw}(c\psi_1).$$ 
    (
    It is easy to see that the choice of the norm does not make a difference in this definition.)
   We prove
   \begin{thm}\label{sing}
For any   weight vector $\vw$,  
$$\Sing_{\vw} = \bigcap_{\nu \text{ a norm on }\R^d}\DI_{{\nu,}\vw}
$$
{In fact, for any fixed norm $\nu$ on $\R^d$, we have 
$$
\Sing_{\vw} = \bigcap_{g\in\SL_d(\R)}\DI_{{\nu\circ g,}\vw}.
$$}
\end{thm}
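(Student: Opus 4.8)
The plan is to translate the statement into the language of a diagonal flow on $X_d=\SL_d(\R)/\SL_d(\Z)$ and, for the non-trivial inclusion, to construct a norm — or, for the refined statement, a suitable $\SL_d(\R)$-translate of the given one — whose critical body is missed by the relevant trajectory along a sequence of times tending to infinity.

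First one sets up the dictionary. Put $a_s:=\diag(e^{s\va},e^{-s\vb})\in\SL_d(\R)$, so that $s\mapsto a_s\Lambda_A$ is a trajectory in $X_d$, and let $\pi\colon\R^d\to\R^m$ be the projection onto the first $m$ coordinates. Since every $g\in\SL_d(\R)$ permutes the unimodular lattices, one has $r_{\nu\circ g}=r_\nu$ and $B_{\nu\circ g}(r_{\nu\circ g})=g^{-1}B_\nu(r_\nu)$; unwinding the definition of $D_{\nu\circ g,\vw}(c\psi_1)$ (and using that diagonal matrices commute) then shows that $A\in D_{\nu\circ g,\vw}(c\psi_1)$ if and only if
\[
a_s\Lambda_A\cap\diag(c^\va,I_n)\,g^{-1}B_\nu(r_\nu)\ne\{\mathbf 0\}\qquad\text{for all large }s .
\]
As $\diag(c^\va,I_n)g^{-1}B_\nu(r_\nu)$ is an open neighbourhood of $\mathbf 0$, Mahler's compactness criterion yields the familiar identification $\Sing_\vw=\{A:a_s\Lambda_A\to\infty\text{ in }X_d\text{ as }s\to\infty\}$; this description involves no norm, which is the parenthetical remark, and it immediately gives the easy inclusion $\Sing_\vw\subseteq\bigcap_{g\in\SL_d(\R)}\DI_{\nu\circ g,\vw}\subseteq\bigcap_\nu\DI_{\nu,\vw}$, because a divergent trajectory has $\lambda_1(a_s\Lambda_A)\to0$ and hence eventually meets every fixed neighbourhood of $\mathbf 0$.

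For the opposite inclusion it suffices to prove $\bigcap_{g\in\SL_d(\R)}\DI_{\nu\circ g,\vw}\subseteq\Sing_\vw$ for one fixed norm $\nu$ — this is the displayed ``in fact'' statement, and since $\{\nu\circ g:g\in\SL_d(\R)\}$ is a subfamily of all norms it also yields $\bigcap_\nu\DI_{\nu,\vw}\subseteq\Sing_\vw$. So fix $\nu$ and take $A\notin\Sing_\vw$. By the above, $a_s\Lambda_A$ does not diverge, so some subsequence $a_{s_k}\Lambda_A$ (with $s_k\uparrow\infty$) stays in a fixed compact subset of $X_d$; after passing to a further subsequence, $a_{s_k}\Lambda_A\to\Lambda_*$ for some $\Lambda_*\in X_d$, and I write $\rho:=\lambda_1(\Lambda_*)>0$. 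I will produce $g$ so that for every $c<1$ one has $a_{s_k}\Lambda_A\cap\diag(c^\va,I_n)g^{-1}B_\nu(r_\nu)=\{\mathbf 0\}$ for all large $k$; by the previous paragraph this says $A\notin D_{\nu\circ g,\vw}(c\psi_1)$ for all $c<1$, i.e. $A\notin\DI_{\nu\circ g,\vw}$, as required. To choose $g$: fix $\Lambda_\nu$ in the non-empty compact set $\{\Lambda\in X_d:\Lambda\cap B_\nu(r_\nu)=\{\mathbf 0\}\}$. By the definition of $r_\nu$ the body $B_\nu(r_\nu)$ has critical determinant $1$ (the least covolume of a lattice missing it), so its elements are exactly its critical lattices, and the contact set $F:=\Lambda_\nu\cap\partial B_\nu(r_\nu)$ is finite, a discrete set meeting a bounded set in finitely many points. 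By transitivity of $\SL_d(\R)$ on $X_d$, the set $\{g:g\Lambda_*=\Lambda_\nu\}$ is a non-empty coset of a conjugate of $\SL_d(\Z)$, hence Zariski-dense in $\SL_d(\R)$; as each of the finitely many conditions $\pi(g^{-1}w)=\mathbf 0$ ($w\in F$) carves out a proper subvariety, one may pick $g$ in this coset with $\pi(g^{-1}w)\ne\mathbf 0$ for every $w\in F$. Writing $S:=g^{-1}B_\nu(r_\nu)$, this makes $S$ a symmetric convex body of critical determinant $1$, makes $\Lambda_*=g^{-1}\Lambda_\nu$ a critical lattice of $S$, and makes every vector of its contact set $\Lambda_*\cap\partial S$ have non-zero $\pi$-component.

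Everything then reduces to the following claim: for every $c\in(0,1)$, $\Lambda_*\cap\diag(c^\va,I_n)\overline S=\{\mathbf 0\}$. Granting it, fix $c<1$: the compact set $\diag(c^\va,I_n)\overline S\cap\{x:\|x\|_2\ge\rho/2\}$ is then disjoint from $\Lambda_*$, so, $\lambda_1(a_{s_k}\Lambda_A)$ converging to $\rho>\rho/2$ and $a_{s_k}\Lambda_A$ converging to $\Lambda_*$, for all large $k$ the lattice $a_{s_k}\Lambda_A$ misses both that compact set and $B_2(\rho/2)\setminus\{\mathbf 0\}$, hence misses $\diag(c^\va,I_n)S\setminus\{\mathbf 0\}$, which is what we need. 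The claim is the crux. For $\mathbf 0\ne v\in\Lambda_*$ one must show $\diag(c^{-\va},I_n)v\notin\overline S$ for all $c\in(0,1)$; the difficulty is that a body of critical determinant $1$ is never strictly avoided by a unimodular lattice — scaling it up a little would contradict $\Delta(\overline S)=1$ — so $\Lambda_*$ necessarily touches $\partial S$, and the point is to use that for $c<1$ the contraction $\diag(c^{-\va},I_n)$ strictly dilates the (by construction non-zero) $\pi$-component of each contact vector and so pushes it off $\overline S$, while ruling out that $\diag(c^{-\va},I_n)v$ swings back into $\overline S$ at an intermediate $c$ for a non-contact $v$. Controlling this last possibility is exactly where the position of $S$ must be made compatible with the contracting directions $\va$, and I expect it to be the hardest part of the argument; it should be handled by strengthening the choice of $g$ in the previous paragraph — requiring, in addition, that at each contact vector $v$ of $\Lambda_*$ an outer normal to $\overline S$ pair non-negatively with the infinitesimal contraction direction $(\alpha_1v_1,\dots,\alpha_mv_m,0,\dots,0)$, which is an open, non-vacuous condition on the Zariski-dense coset — after which, when $\nu$ is strictly convex, strict convexity of $\overline S$ forces $\diag(c^{-\va},I_n)v$ out of $\overline S$ for every $c<1$, the general case following by a finite, face-by-face refinement of the same idea.
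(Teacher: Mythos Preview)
Your overall strategy is the paper's: use the transitivity of $\SL_d(\R)$ on $X_d$ together with $g^{-1}\mathcal{L}_\nu=\mathcal{L}_{\nu\circ g}$ to place a chosen lattice in the critical locus of a translated norm. You run it contrapositively (pick an accumulation point $\Lambda_*$ of the non-divergent orbit and make it critical), while the paper runs it directly (for each $\Lambda\in X_d$ find a neighbourhood the orbit eventually avoids); these are equivalent.

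The gap is in your finish. Having arranged $\Lambda_*\in\mathcal{L}_{\nu\circ g}$, you try to verify $A\notin D_{\nu\circ g,\vw}(c\psi_1)$ via the ``claim'' $\Lambda_*\cap\diag(c^\va,I_n)\,\overline S=\{\mathbf 0\}$ for all $c<1$. You correctly identify that this is delicate --- a diagonal contraction need not map a general symmetric convex body into itself (a thin tilted ellipsoid already shows this), so non-contact lattice vectors really can ``swing back'' --- and your outer-normal/strict-convexity sketch only controls first-order behaviour at the finitely many contact points, not this global phenomenon; the ``face-by-face refinement'' is not an argument. The paper bypasses the difficulty entirely by working with the round balls $B_{\nu\circ g}(r)$ rather than the skewed bodies $\diag(c^\va,I_n)S$: for each $r<r_{\nu\circ g}$ the set $\mathcal{K}_{\nu\circ g}(r)$ is a neighbourhood of $\mathcal{L}_{\nu\circ g}\ni\Lambda_*$, so $a_{s_k}\Lambda_A\in\mathcal{K}_{\nu\circ g}(r)$ for all large $k$, and Proposition~\ref{dynamical-restatement} then gives $A\notin\DI_{\nu\circ g,\vw}$ directly. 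No Zariski-generic conditions on $g$, no contact-set analysis --- any $g$ with $g\Lambda_*\in\mathcal{L}_\nu$ works. (A minor slip: in your easy-inclusion paragraph the chain $\Sing_\vw\subseteq\bigcap_g\DI_{\nu\circ g,\vw}\subseteq\bigcap_\nu\DI_{\nu,\vw}$ has its second inclusion reversed, since $\{\nu\circ g:g\in\SL_d(\R)\}$ is only a sub-family of all norms.)
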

This characterization of singular systems of linear forms is new even in the unweighted case.

The structure of the paper is as follows; in the next section we give a dynamical interpretation of Dirichlet-improvability.
In particular, the relation to the \textit{critical locus} of a norm is clarified.
An effective equidistribution result on the space of lattices then yields the coarse form of Dirichlet's theorem as in Theorem \ref{weighted-dirichlet-thm}.
Theorems \ref{perfect-Dirichlet-sup}, \ref{thickness-Dirichlet-sup}, \ref{perfect-Dirichlet},    \ref{thickness-Dirichlet}(b) and \ref{sing} are proved in the next two sections by using the geometry of numbers to identify certain divergent subsets in the space of lattices.
Part (a) of Theorem \ref{thickness-Dirichlet} is proved in \S \ref{transversality} using results of the first-named author along with An and Guan.

\subsection*{Acknowledgements}
The authors are grateful to   Nikolay Moshchevitin  for helpful discussions, and to the anonymous referee for several useful comments.

\section{Dirichlet improvable matrices form a null set}
As before, $X_d$ denotes the space of unimodular lattices in $\R^d$, and $\nu$ stands for a norm on $\R^d$.
For any $r>0$ define
\begin{equation*}
    \mathcal{K}_\nu(r) := \big\{ \Lambda \in X_d : \Lambda \cap B_\nu\left(r\right) = \{{\bf 0}\}\big\}.
\end{equation*}
These sets are compact in view of Mahler's Compactness Criterion, and empty for 
$r>r_\nu$, whereas for $0<r<r_\nu$, these give a system of neighborhoods of the non-empty compact \textit{critical locus} $\mathcal{L}_\nu:=\mathcal{K}_\nu(r_\nu)$.
Up to scaling, $\mathcal{L}_\nu$ gives the set of lattices witnessing the densest lattice-packings of the unit ball of $\nu$.
Further, given a weight vector as in \eqref{weights}, we have the following one-parameter subgroup of $\SL_d(\R)$:
\begin{equation}\label{a_s}
    a_s = \left[ {\begin{array}{cc}
   \left(e^{s}\right)^\va & 0 \\
   0 & \left(e^{-s}\right)^\vb \\
  \end{array} } \right].
\end{equation}
\begin{prop}\label{dynamical-restatement}
An $m \times n$ matrix $A$ belongs to $\mathbf{DI}_{\nu, \vw}$ if and only if there is some $0<r<r_\nu$ and $s_0>0$ such that
\begin{equation*}
 \{a_s\Lambda_A : s> s_0\} \cap \mathcal{K}_\nu(r) = \varnothing.
\end{equation*}
\end{prop}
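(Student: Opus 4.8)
The plan is to fix $c\in(0,1)$, give a dynamical reformulation of membership in $D_{\nu,\vw}(c\psi_1)$, and then take the union over $c$. Set
$$h_c:=\diag\big(c^{\alpha_1-1/d},\dots,c^{\alpha_m-1/d},\underbrace{c^{-1/d},\dots,c^{-1/d}}_{n}\big);$$
since $\sum_i\alpha_i=\sum_j\beta_j=1$ we get $\det h_c=c^{1-(m+n)/d}=1$, so $h_c\in\SL_d(\R)$, and clearly $h_c\to I_d$ as $c\to1^-$. A direct computation gives the identity $\left[{\begin{smallmatrix}(c/t)^\va & 0\\ 0 & t^\vb\end{smallmatrix}}\right]=c^{1/d}\,h_c\,a_{-\log t}$. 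Using $\Lambda\cap MB_\nu(r_\nu)\neq\{{\bf 0}\}\iff M^{-1}\Lambda\cap B_\nu(r_\nu)\neq\{{\bf 0}\}$, absorbing the scalar $c^{-1/d}$ into the ball (so $B_\nu(r_\nu)$ becomes $B_\nu(c^{1/d}r_\nu)$), and using that $h_c$ commutes with $a_s$, the defining inequality of $D_{\nu,\vw}(c\psi_1)$ rearranges to: $A\in D_{\nu,\vw}(c\psi_1)$ iff $a_s\Lambda_A\notin\mathcal Q_c$ for all sufficiently large $s$, where $\mathcal Q_c:=h_c\,\mathcal K_\nu(c^{1/d}r_\nu)$ is a nonempty compact subset of $X_d$ converging to $\mathcal L_\nu$ as $c\to1^-$. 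As $D_{\nu,\vw}(c\psi_1)$ is nondecreasing in $c$, it follows that $A\in\DI_{\nu,\vw}$ iff the forward orbit $\{a_s\Lambda_A:s>s_0\}$ is disjoint from $\mathcal Q_c$ for some $c<1$ and some $s_0$, and it remains to compare this with the displayed condition involving $\mathcal K_\nu(r)$, $0<r<r_\nu$.

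For the ``if'' direction, suppose $\{a_s\Lambda_A:s>s_0\}\cap\mathcal K_\nu(r)=\varnothing$ for some $r<r_\nu$, i.e.\ for $s>s_0$ the lattice $a_s\Lambda_A$ has a nonzero vector of $\nu$-length $<r$. Writing $\|g\|_\nu:=\sup_{\vx\neq{\bf 0}}\nu(g\vx)/\nu(\vx)$, we have $\|h_c^{-1}\|_\nu\to1$ as $c\to1^-$, so for $c$ close enough to $1$, $\|h_c^{-1}\|_\nu\,r<c^{1/d}r_\nu$; applying $h_c^{-1}$ to the short vector shows $h_c^{-1}a_s\Lambda_A$ has a nonzero vector of $\nu$-length $<c^{1/d}r_\nu$ for $s>s_0$, i.e.\ $a_s\Lambda_A\notin\mathcal Q_c$, whence $A\in D_{\nu,\vw}(c\psi_1)\subseteq\DI_{\nu,\vw}$.

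For the ``only if'' direction, suppose $A\in D_{\nu,\vw}(c\psi_1)$ for some $c<1$, so that for $s>s_0$ there is $\vv_s\in\Lambda_A\setminus\{{\bf 0}\}$ with $\nu(h_c^{-1}a_s\vv_s)<c^{1/d}r_\nu$. The crude bound $\nu(a_s\vv_s)\le\|h_c\|_\nu\,\nu(h_c^{-1}a_s\vv_s)$ only yields $\nu(a_s\vv_s)<r_\nu$ (for $\ell^p$ norms $\|h_c\|_p=c^{-1/d}$ exactly, so the strict gap below $r_\nu$ is lost), so instead we flow forward by a fixed time: for $T>0$ and $s>s_0$ the nonzero vector $a_{s+T}\vv_s=a_Th_c\cdot(h_c^{-1}a_s\vv_s)\in a_{s+T}\Lambda_A$ satisfies
$$\nu\big(a_{s+T}\vv_s\big)\le\|a_Th_c\|_\nu\cdot c^{1/d}r_\nu=\|D_T\|_\nu\cdot r_\nu,\qquad D_T:=c^{-1/d}a_Th_c=\diag\big((ce^T)^{\alpha_i},\,e^{-T\beta_j}\big).$$
For $0<T<\log(1/c)$ every diagonal entry of $D_T$ lies in $(0,1)$, so for the Euclidean norm — and more generally any norm monotone in $(|x_1|,\dots,|x_d|)$, in particular the norms \equ{cyl} with $\eta$ monotone — one has $\|D_T\|_\nu<1$. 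Fixing such a $T$ and setting $\kappa:=\|D_T\|_\nu<1$, we get that for every $s'>s_0+T$ the lattice $a_{s'}\Lambda_A$ has a nonzero vector of $\nu$-length $<\kappa r_\nu$, so $\{a_s\Lambda_A:s>s_0+T\}\cap\mathcal K_\nu(\kappa r_\nu)=\varnothing$ with $\kappa r_\nu<r_\nu$, as required.

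The hard part is the final step of the ``only if'' direction: exhibiting a fixed $T$ with $\|D_T\|_\nu<1$, i.e.\ showing that the genuinely weighted — and for $d\ge3$ non-scalar — diagonal contraction $D_T$ really contracts $\nu$. This is immediate for monotone norms, which covers all the cases used in this paper; for a general norm $\nu$ one must analyze more carefully the interaction of the twist $h_c$ with the shape of $B_\nu$, and without such a bound one only obtains the weaker conclusion that the orbit eventually leaves the critical locus $\mathcal L_\nu$ rather than a genuine neighborhood of it.
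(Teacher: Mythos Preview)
Your argument is the same Dani-type correspondence as the paper's, with only a cosmetic difference in the change of time variable. The paper puts $e^{s}=t/\sqrt{c}$, which turns the Dirichlet box directly into $\diag(c^{\alpha_i/2},c^{\beta_j/2})\,B_\nu(r_\nu)$; your choice $s=\log t$ produces the off-centre twist $h_c$, and your subsequent ``flow forward by $T$'' merely compensates for this --- taking $T=\tfrac12\log(1/c)$ gives $D_T=\diag\big((ce^{T})^{\alpha_i},e^{-T\beta_j}\big)=\diag(c^{\alpha_i/2},c^{\beta_j/2})$, exactly the paper's matrix. So the flow-forward step buys no extra generality.

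For the ``if'' direction your continuity argument ($h_c\to I_d$, hence $\|h_c^{-1}\|_\nu\to1$, so $\|h_c^{-1}\|_\nu\,r<c^{1/d}r_\nu$ for $c$ near $1$) is actually a bit slicker than the paper's and is valid for \emph{every} norm. The paper instead fixes $\gamma=\max_j\beta_j$ and an explicit $c_1$, and then needs a diagonal matrix with all entries $\le1$ to send $B_\nu(r_\nu)$ into itself --- the same monotonicity hypothesis you isolate below.

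Your caveat about the ``only if'' direction is exactly right, and it applies verbatim to the paper's proof: the step ``Let $r=r_\nu\cdot\max\{c^{\alpha_i/2},c^{\beta_j/2}\}$ \dots\ Thus $a_s\Lambda_A\notin\mathcal K_\nu(r)$'' asserts that $\diag(c^{\alpha_i/2},c^{\beta_j/2})$ has $\nu$-operator norm at most $\max\{c^{\alpha_i/2},c^{\beta_j/2}\}$, which is precisely the absolute/monotone norm property. When this matrix happens to be scalar (e.g.\ $m=n=1$, or the unweighted case with $m=n$) there is no issue; in general neither argument, as written, covers a non-monotone norm. In short: your proof is correct and matches the paper's in both method and scope; the limitation you flag is real and is shared by the paper's own argument.
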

\begin{proof}
Say $A \in \mathbf{DI}_{\nu, \vw}$, so that there is some $0<c<1$ with $A \in D_{\nu, \vw}(c\psi_{1})$.
The defining intersection condition for $D_{\nu,\vw}(c\psi_1)$ can be changed to
\begin{equation*}\label{asDirichlet}
a_s\Lambda_A \cap  a_s\left[ {\begin{array}{cc}
   \big(c\psi_{1}(t)\big)^\va & 0 \\
   0 & t^\vb \\
  \end{array} } \right]
  B_\nu\left(r_\nu\right) \ne \{{\bf 0}\}
\end{equation*}
for all sufficiently large $t$.
Putting
\begin{equation}\label{s-t}
s= \frac{1}{2}\ln\frac{t^2}{c},
\end{equation}
the condition becomes
\begin{equation*}\label{Dani-transformed1}
    a_s\Lambda_A \cap 
    \left[ {\begin{array}{cc}
   \left(\sqrt{c}\right)^\va & 0 \\
   0 & \left(\sqrt{c}\right)^\vb \\
  \end{array} } \right] B_\nu\left(r_\nu\right) \ne \{{\bf 0}\}
\end{equation*}
for all sufficiently large $s$.
Let
\begin{equation*}
r = r_\nu\cdot\max\left\lbrace {c}^{\alpha_1/2},\dots, {c}^{\alpha_m/2} , c^{\beta_1/2},\dots,  c^{\beta_n/2}\right\rbrace.
\end{equation*}
Since $c<1$, $r$ is less than $r_\nu$.
Thus, we have that 
\begin{equation}\label{asnotinKr}
    a_s\Lambda_A \notin \mathcal{K}_\nu(r)
\end{equation}
for all sufficiently large $s$.

Conversely, say we have a matrix $A$ for which there is an $0<r<r_\nu$ such that
\eqref{asnotinKr} holds
for all sufficiently large $s$.
Thus 
\begin{equation}\label{converseDyn}
    a_s\Lambda_A \cap B_\nu\left(r\right) \ne \{{\bf 0}\}
\end{equation}
for all sufficiently large $s$.
Condition \eqref{converseDyn} can be rewritten as
\begin{equation}\label{stot-intersection}
    \Lambda_A \cap \left[ {\begin{array}{cc}
   \left(e^{-s}\right)^\va & 0 \\
   0 & \left(e^{s}\right)^\vb \\
  \end{array} } \right]
  B_\nu\left(r\right) \ne \{{\bf 0}\}.
\end{equation}
So, if we define
\begin{equation*}
    c:= \left(\frac{r}{r_\nu}\right)^{\frac{2}{\gamma}} \text{ with } \gamma:= \max\{\beta_j\},
\end{equation*}
and define $t>0$ by the equation \eqref{s-t},
we see that 
\begin{equation*}\label{e^{-s/m}}
    e^{-s} = \frac{\sqrt{c}}{t} = \psi_1(t) \left(\frac{r}{r_\nu}\right)^{1/\gamma}\ \text{ and }\ e^{s} = \frac{t}{\sqrt{c}} = t \left(\frac{r_\nu}{r}\right)^{1/\gamma}.
\end{equation*}
From this we see that 
\begin{equation*}
    \frac{r}{r_\nu}e^{-s\alpha_i} = \left(\frac{r}{r_\nu}\right)^{1+\alpha_i/\gamma} \psi_{1}(t)^{\alpha_i}\ \text{ and } \ 
    \frac{r}{r_\nu} e^{s\beta_j} = \left(\frac{r}{r_\nu}\right)^{1-\frac{\beta_j}{\gamma}}t^{\beta_j}.
\end{equation*}
By choice of $\gamma$, we see that $\left(\frac{r}{r_\nu}\right)^{1-\frac{\beta_j}{\gamma}} \leq 1.$
Defining $c_1:=\left(\frac{r}{r_\nu}\right)^{\frac{1}{\alpha_i}+\frac{1}{\gamma}}$, which is less than $1$, condition \eqref{stot-intersection} then implies
\begin{equation*}
    \Lambda_A \cap 
    \left[ {\begin{array}{cc}
   \left(c_1\psi_{n/m}(t)\right)^\va & 0 \\
   0 & \left(t\right)^\vb \\
  \end{array} } \right]
  B_\nu\left(r_\nu\right) \ne \{{\bf 0}\}.
\end{equation*}
From this we can see that $A \in \mathbf{DI}_{\nu,\vw}$.
\end{proof}
Propositions of the above sort first appeared in \cite{d} and now go by the name `Dani's correspondence'.
\begin{cor}\label{perfect-Dirichlet-dynamics}
We have the equivalence 
\begin{equation*}
    A \notin D_{\nu,\vw}(\psi_{1}) \iff a_s\Lambda_A \in \mathcal{L}_\nu \text{ for an unbounded set of positive times } s.
\end{equation*}
\begin{proof}
It suffices to go through the above proof putting $c=1$ and $r=r_\nu$ in the forward and backward directions of the equivalence respectively.
\end{proof}
\end{cor}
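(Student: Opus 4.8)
The plan is to specialize the proof of Proposition~\ref{dynamical-restatement} to the boundary value $c=1$ (equivalently, to radius $r=r_\nu$), where the argument collapses to a single chain of equivalences. First I would unwind the definition of $D_{\nu,\vw}(\psi_1)$: the assertion $A\notin D_{\nu,\vw}(\psi_1)$ means precisely that
\[
\Lambda_A\cap\left[{\begin{array}{cc}\psi_1(t)^\va & 0\\ 0 & t^\vb\end{array}}\right]B_\nu(r_\nu)=\{{\bf 0}\}
\]
for an unbounded set of $t>0$, this being the negation of the condition ``nonempty for all sufficiently large $t$''.

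Next, for each such $t$ I would apply the automorphism $a_s$ with $s=\ln t$, the $c=1$ instance of the time change \eqref{s-t}, so that $e^s=t$. Since $\psi_1(t)=1/t$, the conjugated matrix
\[
a_s\left[{\begin{array}{cc}\psi_1(t)^\va & 0\\ 0 & t^\vb\end{array}}\right]
=\left[{\begin{array}{cc}(e^s/t)^\va & 0\\ 0 & (t/e^s)^\vb\end{array}}\right]
\]
is the identity, so the displayed intersection condition is equivalent to $a_s\Lambda_A\cap B_\nu(r_\nu)=\{{\bf 0}\}$, i.e.\ to $a_s\Lambda_A\in\mathcal{K}_\nu(r_\nu)=\mathcal{L}_\nu$. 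As $s\mapsto e^s$ is an increasing bijection of $\R_{>0}$ onto $(1,\infty)$, an unbounded set of admissible $t$ corresponds to an unbounded set of positive $s$; reading the equivalences in both directions gives the stated characterization.

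I do not expect any real obstacle; the one point worth highlighting is the contrast with Proposition~\ref{dynamical-restatement}. There, distributing $a_s$ across the diagonal matrix leaves behind factors $c^{\alpha_i/2}$ and $c^{\beta_j/2}$ that are strictly less than $1$, forcing a replacement of $r_\nu$ by a strictly smaller radius $r$ to absorb them; when $c=1$ all of these factors equal $1$, the conjugated matrix is exactly the identity, and the trajectory lands precisely on the critical locus $\mathcal{L}_\nu=\mathcal{K}_\nu(r_\nu)$ rather than merely on a proper neighborhood of it. The only things requiring (minor) care are keeping every ball open --- as in the definitions of $D_{\nu,\vw}(\psi)$ and of $\mathcal{K}_\nu(r)$ --- so that ``$a_s\Lambda_A\cap B_\nu(r_\nu)=\{{\bf 0}\}$'' coincides exactly with ``$a_s\Lambda_A\in\mathcal{L}_\nu$'', and matching the ``eventually'' quantifier on the Diophantine side with the ``frequently'' quantifier on the dynamical side under the monotone substitution $t=e^s$.
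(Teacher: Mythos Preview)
Your proposal is correct and follows exactly the approach the paper indicates: you specialize the computation in Proposition~\ref{dynamical-restatement} to $c=1$ (equivalently $r=r_\nu$), where the time change becomes $s=\ln t$ and the conjugated diagonal matrix collapses to the identity, yielding the equivalence with $a_s\Lambda_A\in\mathcal{L}_\nu$. The paper's one-line proof is precisely this, and your write-up simply unpacks it in full detail.
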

In order to prove Theorem \ref{weighted-dirichlet-thm} we need the following equidistribution theorem of Kleinbock--Weiss \cite[Theorem 2.2]{KW}, see also \cite[Theorem 1.3]{KM4} for an effective version. The argument appears in \cite{KSY} in case of $\nu$ being the supremum norm and applies with little changes to the general case.
\begin{thm}\label{Kleinbock-Weiss-thm}
Let  $f \in C_c(X_d)$, $B\subset \mr$ be bounded with positive Lebesgue measure, and $\delta >0$ be given.
Then there exists an $s_0>0$ such that for all $s>s_0$,
\begin{equation*}\label{Kleinbock-Weiss-eqn}
\left|\frac{1}{\lambda(B)} \int_B f\left(a_s\Lambda_A\right)\,d\lambda(A) - \int_{X_d} f(x)\,d\mu\right| < \delta.
\end{equation*}
Here, the integrals are taken with respect to the Lebesgue measure $\lambda$ on $\mr$ and the Haar probability measure $\mu$ on $X_d$.
$\square$
\end{thm}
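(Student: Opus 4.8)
The plan is to deduce the theorem from mixing of the flow $(a_s)$ on $(X_d,\mu)$, by the classical \emph{thickening} argument (the method of Margulis; see \cite{KW}, and \cite{KM4} for the effective version). The assertion is simply that $\frac{1}{\lambda(B)}\int_B f(a_s\Lambda_A)\,d\lambda(A)\to\int_{X_d}f\,d\mu$ as $s\to\infty$ for every fixed $f\in C_c(X_d)$ and every bounded $B\subset\mr$ with $\lambda(B)>0$; the $\delta$--$s_0$ formulation is then automatic. Exhausting $B$ from inside by a finite disjoint union $\bigcup_{i=1}^N Q_i$ of cubes with $\lambda(B\setminus\bigcup_i Q_i)$ arbitrarily small changes the average by $O\big(\|f\|_\infty\,\lambda(B\setminus\bigcup_i Q_i)/\lambda(B)\big)$, and the average over $\bigcup_i Q_i$ is a convex combination of the averages over the individual cubes; so it suffices to treat the case that $B$ is a cube.

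First I would record the Dani relation $a_su_Aa_{-s}=u_{A(s)}$ with $A(s)_{ij}=e^{s(\alpha_i+\beta_j)}A_{ij}$, so that $a_s\Lambda_A=u_{A(s)}(a_s\Z^d)$; since each $\alpha_i+\beta_j>0$, the subgroup $U:=\{u_A:A\in\mr\}$ is normalized by $(a_s)$ and lies inside its expanding horospherical subgroup $U^+$, with $U=U^+$ precisely when all $\alpha_i$ coincide and all $\beta_j$ coincide (in particular in the unweighted case). Fix a small $\e>0$ and let $\mathcal O$ be a relatively compact neighborhood of the identity in the subgroup $P^-=Z(a_s)\,U^-$ (centralizer times contracting horospherical subgroup) that is $\e$-small in the $Z(a_s)$-directions and small enough that $(v,A)\mapsto vu_A$ is injective on $\mathcal O\times B$. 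Fix a continuous probability density $\theta$ on $\mathcal O$ and set
\begin{equation*}
J_s:=\frac{1}{\lambda(B)}\int_B\int_{\mathcal O}f\big(a_s\,v\,u_A\,\Z^d\big)\,\theta(v)\,dv\,d\lambda(A).
\end{equation*}
For $v=zu^-$ ($z\in Z(a_s)$, $u^-\in U^-$) one has $a_sva_{-s}=z\,(a_su^-a_{-s})$, which is within $O(\e)$ of $e$, uniformly for $v\in\operatorname{supp}\theta$, once $s$ is large; so uniform continuity of $f$ gives $\big|J_s-\frac{1}{\lambda(B)}\int_B f(a_s\Lambda_A)\,d\lambda(A)\big|\le\omega_f(C\e)$ for all large $s$, with $\omega_f$ the modulus of continuity of $f$.

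Assume first $U=U^+$. Since $\mathfrak{p}^-\oplus\mathfrak{u}=\mathfrak{g}$ and $P^-\cap U=\{e\}$, the map $(v,A)\mapsto vu_A$ is a diffeomorphism of $\mathcal O\times B$ onto an open subset $\Omega\subset G$; pushing the finite measure $\theta(v)\,dv\,d\lambda(A)$ forward to $\Omega$ and then to $X_d$ via $g\mapsto g\Z^d$ (a local diffeomorphism carrying Haar measure to a multiple of $\mu$, injective on each of finitely many pieces covering $\Omega$) gives $J_s=\int_{X_d}f(a_sx)\,\rho(x)\,d\mu(x)$ for a fixed continuous probability density $\rho$ on $X_d$. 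By invariance of $\mu$, $J_s=\langle f,\rho\circ a_{-s}\rangle_{L^2(X_d)}$, and mixing of $(a_s)$ on $(X_d,\mu)$ --- which holds because $\SL_d(\R)$ is simple with no compact factors, so the Howe--Moore theorem makes matrix coefficients on $L^2_0(X_d)$ vanish as $a_s\to\infty$ --- yields $\langle f,\rho\circ a_{-s}\rangle\to\big(\int f\,d\mu\big)\big(\int\rho\,d\mu\big)=\int_{X_d}f\,d\mu$; letting $s\to\infty$ and then $\e\to0$ completes this case. When $U\subsetneq U^+$ I would instead use the Dani relation to rewrite the average as $\frac{1}{\lambda(B_s)}\int_{B_s}f\big(u_{A'}(a_s\Z^d)\big)\,d\lambda(A')$ with $B_s=\{A(s):A\in B\}$ a box all of whose sides tend to $\infty$, thicken this $U$-box to a box in the full horospherical $U^+$, reduce (via $u\mapsto a_sua_{-s}$) to the already-settled equidistribution of a fixed $U^+$-box pushed by $a_s$, and verify that enlarging $B_s$ in the complementary $U^+/U$-directions perturbs the average by $o(1)$.

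I expect this last verification to be the main obstacle: it is precisely where one uses that $U$ is normalized by $(a_s)$ and that $(a_s)$ expands $U$ no more slowly than it expands the complementary directions, and it is the only step beyond the standard mixing machinery (for $m=n=1$, and whenever $U=U^+$, it is vacuous). In the effective framework of \cite{KM4} the additional work is to replace qualitative mixing by a quantitative rate of decay of matrix coefficients (a spectral gap for $\SL_d(\R)$) and to pair it with an effective non-divergence estimate ruling out escape of mass; with these the same thickening produces $s_0$ explicitly in terms of $f$, $B$, and $\delta$.
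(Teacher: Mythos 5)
First, for context: the paper does not actually prove this statement — it is quoted with a terminal $\square$ as a known result (\cite{KW}, Theorem 2.2; effective version in \cite{KM4}) and used as a black box, so there is no internal proof to compare against. Judged on its own terms, your write-up is correct and essentially complete in the case where all the $\alpha_i$ coincide and all the $\beta_j$ coincide, i.e.\ when $\{u_A\}$ equals the full expanding horospherical subgroup $U^+$ of $a_s$: the reduction to cubes, the thickening by a neighborhood in $Z(a_s)U^-$, the uniform-continuity estimate, and the appeal to Howe--Moore mixing are exactly the classical Margulis/Eskin--McMullen argument, and each step checks out.

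The genuine gap is the weighted case, which is the case the paper needs (Theorem \ref{weighted-dirichlet-thm} is stated for arbitrary $\vw$). You correctly diagnose why the first argument breaks there: when $\mathfrak h=\operatorname{Lie}\{u_A\}$ is properly contained in $\mathfrak u^+$, no complement of $\mathfrak h$ in $\mathfrak g$ can consist of non-expanded directions (the non-expanded subspace $\mathfrak g_0\oplus\mathfrak g_{<0}$ has dimension $\dim\mathfrak g-\dim\mathfrak u^+<\dim\mathfrak g-mn$), so a fixed transversal neighborhood is stretched by conjugation and the bound $\big|J_s-\frac{1}{\lambda(B)}\int_Bf(a_s\Lambda_A)\,d\lambda\big|\le\omega_f(C\e)$ fails. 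But the patch you sketch does not close the gap. After conjugating, your box $B_s$ is based at $a_s\Z^d$, which diverges in $X_d$ (it contains $e^{-s\beta_j}\ve_{m+j}$), so ``equidistribution of a large $U$-box'' there is a quantitative race between the size of the box and the depth in the cusp — not something already settled. And the assertion that enlarging $B_s$ in the $U^+/U$-directions perturbs the average by $o(1)$ is essentially equivalent to the theorem itself: it is exactly the point where an average over a proper subgroup of a horosphere could fail to equidistribute. (Indeed, weak-$*$ limits of the translated measures are easily seen to be $\{u_A\}$-invariant, but $\{u_A\}$-invariant probability measures on $X_d$ are far from unique — $\{u_A\Z^d\}$ is already a compact torus carrying one.) Closing this step requires the actual content of \cite{KW} or \cite{KM4} (effective mixing paired with quantitative non-divergence, or a classification/linearization argument for the limit measures); absent that, the honest conclusion of your argument in the weighted case should be a citation, which is what the paper does.
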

\begin{proof}[Proof of Theorem \ref{weighted-dirichlet-thm}]
We have $c<1$.
Let $r$ be associated to $c$ as in Proposition \ref{dynamical-restatement}.
We aim to show that for almost every $A \in \mr$, there is an unbounded positive sequence $(s_k)$ such that
\begin{equation}\label{inKr}
    a_{s_k}\Lambda_A \in \mathcal{K}_\nu(r).
\end{equation}
This and Proposition \ref{dynamical-restatement} then show that almost every $A \notin D_{\nu,\vw}(c\psi_{1})$. 
For $i\in \mathbb{N}$, if the set
\begin{equation*}
    B_i := \bigcap_{s>i}\left\lbrace A \in \mr :  a_s\Lambda_A \notin \mathcal{K}_\nu(r)\right\rbrace
\end{equation*}
has positive Lebesgue measure, choose $B\subset B_i$ compact with positive measure as well.
Take a non-negative $f \in C_c(X_d)$ 
which is supported on $\mathcal{K}_\nu(r)$, and choose $\delta = \frac{1}{2}\int_{X_d} f\,d\mu$.
Applying Theorem \ref{Kleinbock-Weiss-thm} with $s>i$, we get a contradiction.
Thus each $B_i$ has measure zero and thus so does their union.
Hence we have shown that Lebesgue almost every $A \in \mr$ has an unbounded positive sequence $(s_k)$ for which \eqref{inKr} holds.
\end{proof}

\section{Dirichlet's theorem via divergence}

For the rest of the paper we fix a weight vector as in \eqref{weights} and the one-parameter subgroup $\{a_s\}$ of $\SL_d(\R)$ as in 
\eqref{a_s}.  We now address Problem 1  regarding Dirichlet's theorem in the form \equ{full}.
First, a general condition implying the result.
\begin{prop}\label{perfect-Dirichlet-generalthm}
Say $\nu$ is a norm in $\R^d$ with $\mathcal{L}_\nu = \bigcup \mathcal{Z}_i$ a finite union of compact subsets such that each $\mathcal{Z}_i$ has either one of the following properties.
\begin{enumerate}
    \item[(i)] For every $\Lambda\in \mathcal{Z}_i$ and compact $\mathcal{K} \subset X_d$, there is a $t_0$ such that for all $s>t_0$, $a_s\Lambda \notin \mathcal{K}$. That is, every $\Lambda \in \mathcal{Z}_i$ is forward divergent.
    \item[(ii)] For every $\Lambda \in \mathcal{Z}_i$ and compact $\mathcal{K} \subset X_d$, there is a $t_0$ such that for all $s<t_0$, $a_s\Lambda \notin \mathcal{K}$. That is,  every $\Lambda \in \mathcal{Z}_i$ is backward divergent.
\end{enumerate}
Then $D_{\nu,\vw}(\psi_1) = \mr$.
\end{prop}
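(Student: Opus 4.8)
The plan is to use Corollary~\ref{perfect-Dirichlet-dynamics}, which tells us that $A \notin D_{\nu,\vw}(\psi_1)$ precisely when the forward trajectory $\{a_s\Lambda_A : s>0\}$ returns to the critical locus $\mathcal{L}_\nu$ for an unbounded set of times. So to prove $D_{\nu,\vw}(\psi_1) = \mr$ it suffices to show that for every \amr, the trajectory $a_s\Lambda_A$ can enter $\mathcal{L}_\nu$ only for a bounded set of positive times; equivalently, that $\{s \ge 0 : a_s\Lambda_A \in \mathcal{L}_\nu\}$ is bounded above. We decompose this set according to the pieces $\mathcal{Z}_i$: it is the union over $i$ of $S_i := \{s\ge 0 : a_s\Lambda_A \in \mathcal{Z}_i\}$, and since there are finitely many $i$ it is enough to show each $S_i$ is bounded above.

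First I would handle a piece $\mathcal{Z}_i$ of type (i). Here I want to rule out an unbounded sequence $s_k \to \infty$ with $a_{s_k}\Lambda_A \in \mathcal{Z}_i$. Since $\mathcal{Z}_i$ is compact and the $a_s$-action is continuous, passing to a subsequence we may assume $a_{s_k}\Lambda_A \to \Lambda \in \mathcal{Z}_i$. The key observation is a quasi-invariance/continuity trick: for any fixed $\tau$, $a_{s_k + \tau}\Lambda_A = a_\tau(a_{s_k}\Lambda_A) \to a_\tau \Lambda$. Now apply the forward-divergence hypothesis to the point $\Lambda$: choosing a compact neighborhood $\mathcal{K}$ of $\mathcal{Z}_i$, there is a $t_0$ with $a_\tau \Lambda \notin \mathcal{K}$ for all $\tau > t_0$. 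Fix one such $\tau$; then $a_{s_k+\tau}\Lambda_A \to a_\tau\Lambda \notin \mathcal{K}$, so eventually $a_{s_k+\tau}\Lambda_A \notin \mathcal{K}$. But I also want to derive a contradiction by producing arrival back in $\mathcal{Z}_i$ near a divergent time — the cleanest route is actually to run the trajectory \emph{forward} from each $\Lambda$: since $\Lambda \in \mathcal{Z}_i$ is forward divergent, $a_s\Lambda$ leaves every compact set as $s\to+\infty$, yet by continuity $a_s\Lambda$ is a limit of $a_{s_k+s}\Lambda_A$, which we need to stay controlled. The honest formulation: fix a compact $\mathcal{K}$ with $\mathcal{Z}_i \subset \operatorname{int}\mathcal{K}$. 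For the limit point $\Lambda$, pick $\tau > 0$ with $a_\tau\Lambda \notin \mathcal{K}$; by continuity there is a neighborhood $U$ of $\Lambda$ with $a_\tau U \cap \mathcal{K} = \varnothing$. Since $a_{s_k}\Lambda_A \in U$ for large $k$, we get $a_{s_k+\tau}\Lambda_A \notin \mathcal{K} \supset \mathcal{Z}_i$. This alone is not a contradiction, so one must instead argue that the set $S_i$ being unbounded forces, via compactness of $\mathcal{Z}_i$, a limit point $\Lambda\in\mathcal{Z}_i$ \emph{whose entire forward orbit stays in a fixed compact set} (namely a compact set containing the closure of $\bigcup_k\{a_s\Lambda_A : s\ge s_k,\ a_s\Lambda_A\in\mathcal{Z}_i\}$-type data) — contradicting forward divergence of $\Lambda$. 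I expect this compactness bookkeeping to be the main obstacle and the place requiring the most care.

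For a piece $\mathcal{Z}_i$ of type (ii), the argument is symmetric but \emph{easier}, because backward divergence is incompatible with being visited at large positive times in a controlled way. Concretely, suppose $a_{s_k}\Lambda_A \in \mathcal{Z}_i$ with $s_k\to\infty$; after passing to a subsequence, $a_{s_k}\Lambda_A \to \Lambda\in\mathcal{Z}_i$. Apply backward divergence at $\Lambda$ with a compact $\mathcal{K}$ containing $\{\Lambda_A\} \cup \mathcal{Z}_i$: there is $t_0$ with $a_s\Lambda\notin\mathcal{K}$ for all $s<t_0$. Choose a fixed negative $\sigma < t_0$ and note that for $k$ large, $s_k + \sigma > 0$ is still a positive time, and $a_{s_k+\sigma}\Lambda_A = a_\sigma(a_{s_k}\Lambda_A) \to a_\sigma\Lambda \notin \mathcal{K}$. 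But we can choose $|\sigma|$ as large as we like, and $a_{s_k+\sigma}\Lambda_A$ for $\sigma$ ranging over $(-s_k, 0]$ is just an initial segment of the orbit $a_s\Lambda_A$, $s\in(0,s_k]$; forcing this to avoid $\mathcal{K}$ for all sufficiently negative $\sigma$ means the orbit point $\Lambda_A = a_0\Lambda_A$ is a limit of points that have already diverged — more precisely $a_{-s_k}(a_{s_k}\Lambda_A) = \Lambda_A$, while $a_{-s_k}$ applied to nearby points of $\mathcal{Z}_i$ diverges, and uniform continuity on compacta gives the contradiction that $\Lambda_A$ lies outside every compact set.

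Assembling the pieces: $\{s\ge 0 : a_s\Lambda_A\in\mathcal{L}_\nu\} = \bigcup_i S_i$ is a finite union of sets each bounded above, hence bounded above; thus $a_s\Lambda_A\notin\mathcal{L}_\nu$ for all large $s$, and by Corollary~\ref{perfect-Dirichlet-dynamics} we conclude $A\in D_{\nu,\vw}(\psi_1)$. Since $A$ was arbitrary, $D_{\nu,\vw}(\psi_1) = \mr$. The single genuinely delicate point, as noted, is converting "visited at an unbounded sequence of times" into "a limit point with a \emph{nondivergent} orbit," which requires choosing the ambient compact set to absorb all the relevant orbit segments; I would phrase this using Mahler's criterion to quantify divergence by the shortest vector length, making the limiting argument a statement about $\liminf$ of shortest-vector functions along the orbit.
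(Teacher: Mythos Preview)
Your proposal has a genuine gap, which you yourself flag: after passing to a limit point $\Lambda\in\mathcal{Z}_i$ of the sequence $a_{s_k}\Lambda_A$, you cannot conclude anything useful from the divergence of $\Lambda$. In case (i), showing $a_{s_k+\tau}\Lambda_A\notin\mathcal{Z}_i$ for some fixed $\tau$ is not a contradiction, since nobody claimed the orbit is in $\mathcal{Z}_i$ at time $s_k+\tau$. In case (ii), the suggested argument that ``$a_{-s_k}$ applied to nearby points of $\mathcal{Z}_i$ diverges, hence $\Lambda_A$ lies outside every compact set'' fails because $a_{-s_k}$ expands distances, so proximity of $a_{s_k}\Lambda_A$ to $\Lambda$ says nothing about proximity of $\Lambda_A = a_{-s_k}(a_{s_k}\Lambda_A)$ to $a_{-s_k}\Lambda$. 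Your proposed fix via Mahler's criterion and $\liminf$ of shortest-vector lengths does not close this gap either: the forward orbit of the limit point $\Lambda$ simply need not stay in any compact set, and nothing you have written forces it to.

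The paper's proof avoids the limit point entirely, and this is the idea you are missing. Once you pigeonhole so that $a_{s_k}\Lambda_A\in\mathcal{Z}_i$ for all $k$ in an unbounded sequence, the crucial observation is that \emph{each} $a_{s_k}\Lambda_A$ is itself a lattice in $\mathcal{Z}_i$, not merely close to one. In case (i), apply forward divergence directly to the single lattice $a_{s_1}\Lambda_A\in\mathcal{Z}_i$ with $\mathcal{K}=\mathcal{Z}_i$: there is $t_0$ with $a_s(a_{s_1}\Lambda_A)\notin\mathcal{Z}_i$ for all $s>t_0$, yet $a_{s_k-s_1}(a_{s_1}\Lambda_A)=a_{s_k}\Lambda_A\in\mathcal{Z}_i$ with $s_k-s_1\to\infty$. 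In case (ii), apply backward divergence to $a_{s_k}\Lambda_A\in\mathcal{Z}_i$ for large $k$: then $a_{s_1-s_k}(a_{s_k}\Lambda_A)=a_{s_1}\Lambda_A\in\mathcal{Z}_i$ with $s_1-s_k\to-\infty$, again a contradiction. No compactness bookkeeping, no limits, no Mahler's criterion --- just the group law $a_{s}a_{s'}=a_{s+s'}$ and the fact that two different terms of the sequence already lie in $\mathcal{Z}_i$.
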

\begin{proof}
For the sake of contradiction, say that $A \notin D_{\nu,\vw}(\psi_1)$.
By Corollary \ref{perfect-Dirichlet-dynamics}, there is an unbounded positive sequence $(s_k)$ such that
for each $k$, $a_{s_k}\Lambda_A \in \mathcal{L}_\nu$.
By the above {finiteness} hypothesis we might as well assume $\mathcal{L}_\nu$ itself has one of the properties (i) or (ii).
Observe that compactness implies that there is a uniform $t_0$ in the above conditions which works for every $\Lambda \in \mathcal{L}_\nu$.
We now separate into two cases. 
\begin{enumerate}
\item[(i)] We can find $t_0$ such that for all $s>t_0$, \begin{equation}\label{locus-moves-out}
a_s\mathcal{L}_\nu \cap \mathcal{L}_\nu = \varnothing.
\end{equation}
This contradicts the fact that for every $k$, $a_{s_k}\Lambda_A (= a_{s_k-s_1}a_{s_1}\Lambda_A)$ belongs to $\mathcal{L}_\nu$.
\item[(ii)] Find $t_0$ such that for all $s<t_0$, 
\eqref{locus-moves-out} holds.
This contradicts the fact that for every $k$, $a_{s_1}\Lambda_A (= a_{s_1-s_k}a_{s_k}\Lambda_A) \in \mathcal{L}_\nu$.
\end{enumerate}
 Thus $D_{\nu,\vw}(\psi_1) = \mr$.
\end{proof}

\begin{proof}[Proof of Theorem \ref{perfect-Dirichlet-sup}]
Let $B$ denote the set of upper triangular unipotent $d\times d$ matrices.
It is a well-known theorem of Haj\'os \cite{H} that the set $\mathcal{L}_\infty$ is exactly the union
\begin{equation}\label{hajos}
    \bigcup \left\lbrace w B w\SL_d(\Z) : w \text{ is a permutation matrix}\right\rbrace.
\end{equation}
{From this we get that for every permutation matrix $w$, there is some fixed standard basis vector $\ve_i$ which belongs to every $\Lambda \in wBw\SL_d(\Z)$.
From the description of $a_s$ in \eqref{a_s}, we see that, according to whether $m<i$ or $i\leq m$, $\ve_i$ is contracted by $a_s$ either for $s>0$ or $s<0$.
Thus,  for each permutation matrix $w$, we are in one of the two situations of Proposition \ref{perfect-Dirichlet-generalthm}.}
\end{proof}
\begin{proof}[Proof of Theorem \ref{perfect-Dirichlet}(b)]
\cite[Proposition 5.1]{kr} asserts that whenever $\nu$ is a cylindrical norm on $\R^3$ as in \equ{cyl}, the critical locus in $X_3$ is contained in the union of
\begin{equation}\label{cylinderical-locus}
  \mathcal{Z}_1 := \left\lbrace
        \left[ {\begin{array}{ccc}
   * & * & 0\\
   * & * & 0 \\
   * & * & *
  \end{array} } \right]\Z^3
     \right\rbrace
\text{ and }
  \mathcal{Z}_2 := \left\lbrace
        \left[ {\begin{array}{ccc}
   * & * & * \\
   * & * & * \\
   0 & * & *
  \end{array} } \right]\Z^3
     \right\rbrace.
     \end{equation}
Moreover, since we have $m=2$ and $n=1$ by hypothesis,
\begin{equation}\label{m=2n=1}
    a_s = \left[ {\begin{array}{ccc}
   e^{s\alpha_1} & 0 & 0\\
   0 & e^{s\alpha_2} & 0 \\
   0 & 0 & e^{-s}
  \end{array} } \right].
\end{equation}
Thus, if $\Lambda \in \mathcal{Z}_1$, it contains a vector contracted by $a_s$ for $s>0$. And if $\Lambda \in \mathcal{Z}_2$, it contains a vector contracted by $a_s$ for $s<0$.
Applying Proposition \ref{perfect-Dirichlet-generalthm}, we are done.
\end{proof}
We also have the following simple but useful result:
{\begin{prop}\label{perfect-Dirchlet-finite}
Let $\nu$ be a norm on $\R^d$ such that the critical locus $\mathcal{L}_\nu$ is finite.
Then $D_{\nu,\omega}(\psi_1) = \mr$.
\end{prop}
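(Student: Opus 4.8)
The plan is to reduce Proposition~\ref{perfect-Dirchlet-finite} to Proposition~\ref{perfect-Dirichlet-generalthm} by showing that a finite critical locus automatically satisfies its hypotheses. Write $\mathcal{L}_\nu = \{\Lambda_1,\dots,\Lambda_N\}$. Each singleton $\mathcal{Z}_i = \{\Lambda_i\}$ is trivially compact, so it suffices to check that each $\Lambda_i$ is either forward divergent or backward divergent under the flow $\{a_s\}$.

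First I would observe that the argument in the proof of Proposition~\ref{perfect-Dirichlet-generalthm} already extracts the key dynamical fact: if $A \notin D_{\nu,\vw}(\psi_1)$, then by Corollary~\ref{perfect-Dirichlet-dynamics} there is an unbounded positive sequence $(s_k)$ with $a_{s_k}\Lambda_A \in \mathcal{L}_\nu$. So I only need the divergence dichotomy for points of a finite set. Here the essential input is that $\mathcal{L}_\nu$ is invariant under the flow in a strong sense: by Corollary~\ref{perfect-Dirichlet-dynamics}, if $\Lambda \in \mathcal{L}_\nu$ then either $\Lambda = a_s\Lambda_A$ for some $A \notin D_{\nu,\vw}(\psi_1)$ with $a_{s'}\Lambda_A \in \mathcal{L}_\nu$ for unboundedly many $s'$, or — more directly — one can argue as follows. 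Since $\mathcal{L}_\nu$ is finite and $\mathcal{L}_\nu = \mathcal{K}_\nu(r_\nu)$ is the smallest set in the decreasing family $\mathcal{K}_\nu(r)$, continuity of the flow forces the orbit of any $\Lambda \in \mathcal{L}_\nu$ to either leave every compact set in forward time or in backward time: if not, the orbit $\{a_s\Lambda\}$ would accumulate somewhere in $X_d$, and one shows (using that $\nu(a_s \vv)$ varies continuously and that the successive minima are bounded along the orbit on a recurrent subsequence in both directions) that $\Lambda$ would have to be periodic under $\{a_s\}$, forcing $\mathcal{L}_\nu$ to contain the whole (compact) orbit closure — but a nontrivial periodic $\{a_s\}$-orbit is infinite, contradicting finiteness; and a fixed point of $\{a_s\}$ is impossible since $\{a_s\}$ acts without fixed points on $X_d$ (the subgroup is unbounded, so $a_s \Lambda = \Lambda$ for all $s$ would force $\Lambda$ to contain a full lattice stable under an unbounded diagonal group, impossible for a discrete lattice). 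Hence each $\Lambda_i$ is forward or backward divergent.

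Then I would simply invoke Proposition~\ref{perfect-Dirichlet-generalthm} with the finite decomposition $\mathcal{L}_\nu = \bigcup_{i=1}^N \{\Lambda_i\}$, each singleton satisfying (i) or (ii), to conclude $D_{\nu,\vw}(\psi_1) = \mr$. (I note the statement is written with $\omega$ rather than $\vw$ for the weight vector, but this is immaterial.)

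The main obstacle I anticipate is the dichotomy step: ruling out that a point of $\mathcal{L}_\nu$ has a bounded forward orbit \emph{and} a bounded backward orbit. The clean way to see this is that such a two-sidedly bounded orbit would have compact closure $\overline{\{a_s\Lambda\}}$, and one can then argue that the closure must meet $\mathcal{L}_\nu$ on a subsequence in a way that, combined with finiteness, forces periodicity — and periodic orbits of the unbounded one-parameter group $\{a_s\}$ are infinite, contradiction. An alternative, perhaps slicker, route is to note directly that any $\Lambda \in \mathcal{K}_\nu(r_\nu)$ realizes a lattice packing of the ball $B_\nu(r_\nu)$ of maximal density, and the $a_s$-orbit of such a lattice, being volume-preserving and avoiding $B_\nu(r)$ for a slightly smaller $r$ only for bounded time unless divergent, cannot remain in the compact critical region $\mathcal{K}_\nu(r)$ for all large $s$ of one sign without actually diverging; this is exactly the content already exploited in the cylindrical and Haj\'os cases, where an explicit contracted basis vector is produced. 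For a general finite $\mathcal{L}_\nu$ no such explicit vector is named, so the abstract recurrence argument above is the safe path.
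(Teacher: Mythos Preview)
Your reduction to Proposition~\ref{perfect-Dirichlet-generalthm} requires that every lattice in $\mathcal{L}_\nu$ be forward or backward divergent under $\{a_s\}$, and this is the step you correctly flag as the obstacle---but your arguments for it do not go through. A two-sidedly bounded $a_s$-orbit need not be periodic (bounded non-periodic diagonal orbits are abundant in $X_d$; this is exactly the dynamical picture behind $\BA_\vw$), and there is no reason the orbit or orbit closure of a point of $\mathcal{L}_\nu$ should stay inside $\mathcal{L}_\nu$: the critical locus is not $a_s$-invariant. So the inference ``bounded orbit $\Rightarrow$ periodic $\Rightarrow$ infinite subset of $\mathcal{L}_\nu$'' breaks at both arrows. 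In fact the dichotomy you want is likely false in general: given any $\Lambda\in X_d$ one can cook up a norm with $\Lambda$ critical (this is behind Theorem~\ref{sing}), and for suitably chosen $\Lambda$ with bounded $a_s$-orbit one expects the critical locus to be finite (e.g.\ hexagonal norms in $\R^2$, cf.\ the Remark after the proposition).

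The paper's proof avoids this entirely by working with $\Lambda_A$ rather than with the critical lattices. If $A\notin D_{\nu,\vw}(\psi_1)$, Corollary~\ref{perfect-Dirichlet-dynamics} gives unboundedly many $s_k>0$ with $a_{s_k}\Lambda_A\in\mathcal{L}_\nu$; finiteness of $\mathcal{L}_\nu$ then forces $a_{s_j}\Lambda_A=a_{s_k}\Lambda_A$ for some $j\ne k$, so the orbit $\{a_s\Lambda_A\}$ is periodic. But $\Lambda_A=u_A\Z^d$ always contains $\ve_1,\dots,\ve_m$, and $a_s\ve_i=e^{s\alpha_i}\ve_i\to 0$ as $s\to-\infty$, so $\Lambda_A$ is backward divergent---contradicting periodicity. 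The point is that the special structure of $\Lambda_A$ (not of the critical lattices) supplies the divergence, so no analysis of $\mathcal{L}_\nu$ beyond its cardinality is needed.
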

\begin{proof}
Again, by Proposition \ref{dynamical-restatement}, any $A \notin D_{\nu,\omega}$ would give rise to a periodic orbit $\{a_s\Lambda_A\}$.
On the other hand, $\Lambda_A$ is backward divergent under the flow $a_s$.
\end{proof}}
\begin{proof}[Proof of Theorem \ref{perfect-Dirichlet}(a)]
This has already been proved for $p=2$ in \cite[Theorem 1.4]{KR}, and for $p=\infty$ in Theorem \ref{perfect-Dirichlet-sup}.
For the other cases, the work \cite{GGM} shows that $\mathcal{L}_p$ is finite.
Thus we are done by applying Proposition \ref{perfect-Dirchlet-finite}.
\end{proof}
\begin{rem}
Other examples of norms which are known to have finite critical locus are norms in $\R^2$ induced by hexagons, as well as the $\ell^1$ norm in $\R^3$. 
For the former fact see \cite[\S V.8.4, Lemma 13]{Ca} and for the latter see \cite{M} or the discussion in the pages prior to \cite[Equation (4), page 346]{GL}.
\end{rem}

\section{Thickness results via divergence}\label{sup-thick-section}

Some similar observations about divergence in the space of lattices lead us to solutions of Problem 2 as well.
Recall the set $\mathbf{BA}_\vw$ of $\vw$-badly approximable matrices defined in \eqref{wBA}.
It is well known (see \cite[Theorem 2.5]{K98}) that
\begin{equation*}
    A \in \mathbf{BA}_\vw \iff \{a_s\Lambda : s>0\} \text{ is bounded in } X_d.
\end{equation*}
We now give a general proposition giving sufficient conditions (on the norm $\nu$) which ensure that $\mathbf{BA}_\vw$ is a subset of $\mathbf{DI}_\nu$.
\begin{prop}\label{topology-DS}
If $\nu$ is a norm on $\R^d$ such that every $\Lambda \in \mathcal{L}_\nu$ has the property that
\begin{equation*}
    \left\lbrace a_s\Lambda : s \in \R \right\rbrace \text{ is unbounded in } X_d,
\end{equation*}
then $\mathbf{BA}_\vw$ is contained in $\mathbf{DI}_{\nu,\vw}$.
\end{prop}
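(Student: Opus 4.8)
The plan is to argue by contradiction, playing the dynamical characterization of $\mathbf{BA}_\vw$ against Proposition~\ref{dynamical-restatement}. Suppose \amr\ satisfies $A\in\mathbf{BA}_\vw$ but $A\notin\mathbf{DI}_{\nu,\vw}$. By \cite[Theorem 2.5]{K98} the forward orbit $\{a_s\Lambda_A:s>0\}$ is bounded in $X_d$, so its closure $\mathcal{C}$ is compact by Mahler's criterion. On the other hand, negating Proposition~\ref{dynamical-restatement}, for every $0<r<r_\nu$ and every $s_0>0$ there exists $s>s_0$ with $a_s\Lambda_A\in\mathcal{K}_\nu(r)$. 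Fixing an increasing sequence $r_k\uparrow r_\nu$ and applying this repeatedly (at step $k$ with the threshold $\max(s_{k-1},k)$), I would produce times $s_k\to\infty$ with $a_{s_k}\Lambda_A\in\mathcal{K}_\nu(r_k)$.

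Since $\mathcal{C}$ is compact, after passing to a subsequence I may assume $a_{s_k}\Lambda_A\to\Lambda_\infty$ for some $\Lambda_\infty\in\mathcal{C}$, still with $s_k\to\infty$. The key point of the argument is that $\Lambda_\infty\in\mathcal{L}_\nu$: for any fixed $r<r_\nu$ one has $r_k>r$ for all large $k$, hence $a_{s_k}\Lambda_A\in\mathcal{K}_\nu(r_k)\subseteq\mathcal{K}_\nu(r)$ eventually, and since $\mathcal{K}_\nu(r)$ is closed this forces $\Lambda_\infty\in\mathcal{K}_\nu(r)$; as the sets $\mathcal{K}_\nu(r)$ with $r<r_\nu$ form a neighbourhood basis of $\mathcal{L}_\nu$, intersecting over all $r<r_\nu$ gives $\Lambda_\infty\in\mathcal{L}_\nu$.

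To finish, I would show that the entire two-sided orbit of $\Lambda_\infty$ stays in the compact set $\mathcal{C}$: for any $\tau\in\R$, continuity of $a_\tau$ gives $a_\tau\Lambda_\infty=\lim_k a_{\tau+s_k}\Lambda_A$, and because $s_k\to\infty$ we have $\tau+s_k>0$ for all large $k$, so each $a_{\tau+s_k}\Lambda_A$ lies in $\{a_s\Lambda_A:s>0\}$ and therefore $a_\tau\Lambda_\infty\in\mathcal{C}$. Thus $\{a_\tau\Lambda_\infty:\tau\in\R\}\subseteq\mathcal{C}$ is bounded, contradicting the hypothesis on $\mathcal{L}_\nu$ since $\Lambda_\infty\in\mathcal{L}_\nu$. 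This contradiction shows $A\in\mathbf{DI}_{\nu,\vw}$, hence $\mathbf{BA}_\vw\subseteq\mathbf{DI}_{\nu,\vw}$.

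There is no serious obstacle here; the two mildly delicate points are (a) identifying the accumulation point $\Lambda_\infty$ as a member of the critical locus, which uses precisely that the $\mathcal{K}_\nu(r)$ shrink to $\mathcal{L}_\nu$, and (b) the observation that a statement about \emph{two-sided} unboundedness of the limit orbit collapses against \emph{forward} boundedness of the $\mathbf{BA}_\vw$-orbit, because $s_k\to\infty$ makes every translate $\tau+s_k$ eventually positive.
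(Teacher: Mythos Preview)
Your proof is correct and follows essentially the same strategy as the paper's: assume $A\in\mathbf{BA}_\vw\smallsetminus\mathbf{DI}_{\nu,\vw}$, produce a subsequential limit $\Lambda_\infty\in\mathcal{L}_\nu$ of the forward orbit, and derive a contradiction from boundedness of the forward orbit. The only cosmetic difference is that the paper splits into two cases (forward vs.\ backward unboundedness of $\{a_s\Lambda_\infty\}$) and uses an open-neighbourhood argument, whereas you handle both at once by directly showing $a_\tau\Lambda_\infty\in\mathcal{C}$ for every $\tau\in\R$; your packaging is slightly cleaner but the content is the same.
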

\begin{rem}
To be precise, being unbounded means that for each compact $\mathcal{K}\subset X_d$, there is some $s \in \R$ such that $a_s\Lambda \notin \mathcal{K}$.
\end{rem}
\begin{proof}
We again use the characterization in Proposition \ref{dynamical-restatement}.
Say $\nu$ is a norm with the property as above.
Say $A \in \mathbf{BA}_\vw$.
Say further, contrary to the theorem, that there is an unbounded positive sequence $(s_k)$ and a lattice $\Lambda \in \mathcal{L}_\nu$ such that
$
    a_{s_k}\Lambda_A \to \Lambda.
$
Let $\mathcal{K}\subset X_d$ be a compact set such that 
\begin{equation*}
    \{a_s\Lambda_A : s>0\} \subset \mathcal{K}.
\end{equation*}
We consider two cases.
\begin{enumerate}
    \item[(i)] $\{a_s\Lambda : s>0\}$ is unbounded. This implies that there is a positive time $t$ for which
$
 a_{t}\Lambda \notin \mathcal{K}.
$
Let $\mathcal{V}$ be a neighborhood of $\Lambda$ such that 
\begin{equation}\label{nbd-notinK}
    a_{t}\mathcal{V} \subset X_d \smallsetminus \mathcal{K}.
\end{equation}
Thus, for large enough $k$, we have 
$
    a_{t+s_k}\Lambda_A \notin \mathcal{K},
$
a contradiction.
\item[(ii)] For the second case, we assume that
$
    \{a_s\Lambda : s<0\} \text{ is unbounded.}
$
This means we have a negative $t$ for which $a_t\Lambda \notin \mathcal{K}$.
Let $\mathcal{V}$ again be a neighborhood such that \eqref{nbd-notinK} holds.
We have that for large $k$, $a_{t+s_k}\Lambda_A \notin \mathcal{K}$.
On observing that $t+s_k$ is positive for large $k$, we have a contradiction.
\end{enumerate}
Thus, any $A$ in ${\BA_\vw}$ must belong to $\mathbf{DI}_{\nu,\vw}$.
\end{proof}
\begin{proof}[Proof of Theorem \ref{thickness-Dirichlet-sup}]
As was observed before, it follows from the expression \eqref{hajos}  for the critical locus
$\mathcal{L}_\infty$ that 
every $\Lambda \in \mathcal{L}_\infty$ contains one of the basis vectors $\ve_i$.
{
So, according to whether $m<i$ or $i\leq m$, $\ve_i$ is contracted by $a_s$ either for $s>0$ or $s<0$.
This, of course, implies that $\{a_s\Lambda\}$ is unbounded and we can apply Proposition \ref{topology-DS}.}
\end{proof}
\begin{proof}[Proof of Theorem \ref{thickness-Dirichlet}(b).]
{Again, from \eqref{cylinderical-locus} and \eqref{m=2n=1} describing the critical locus and the flow respectively, we see that each $\Lambda \in \mathcal{L}_\nu$ is either forward or backward divergent (hence also unbounded) with respect to $a_s$.
Thus Proposition \ref{topology-DS} applies.} 
\end{proof}
Perhaps now is a good time to 
observe that the conclusion of Proposition \ref{topology-DS} does not always hold. More precisely, for any $A\notin  \Sing_\vw$ there exists a norm $\nu$ on $\R^d$ such that $A$ does not belong to $\DI_\vw$.
\begin{proof}[Proof of Theorem \ref{sing}]
It is well known (see \cite[Theorem 7.4]{K98}, or  \cite[Proposition 2.12]{d} for a version with equal weights) that $A\in \Sing_\vw$ if and only if $\Lambda_A$ is forward divergent under $a_s$.
And by divergence, any such element must avoid any given critical locus after a certain time.
Thus $\Sing_\vw$ is contained in each of the intersections in the theorem.

{
To complete the proof, it now suffices to show that, for a fixed norm $\nu$,
\begin{equation*}
\bigcap_{g \in \SL_d(\R)} \mathbf{DI}_{\nu\circ g, \vw} \subset \Sing_\vw.
\end{equation*}
Take $A \in \mr$ that is Dirichlet-improvable for all norms of the form $\nu\circ g$.
In order to show that $A$ is singular, it suffices to show that for every   $\Lambda \in X_d$, there is a neighborhood $\mathcal{V}$ of $\Lambda$ and some time $s_0$ such that the orbit $\{a_s\Lambda_A : s>s_0 \}$ avoids $\mathcal{V}$.

Fix $\Lambda \in X_d$ and pick some $g \in \SL_d(\R)$ such that $g\Lambda \in \mathcal{L}_\nu$. Since $g^{-1}\mathcal{L}_\nu = \mathcal{L}_{\nu\circ g}$, we see that $\Lambda \in \mathcal{L}_{\nu \circ g}$. By Dirichlet-improvability  of $A$ with respect to $\nu\circ g$, we see that there is an $r < r_{\nu\circ g}$ and some $s_0$ such that 
$$
a_s\Lambda_A \notin \mathcal{K}_{\nu \circ g}(r) \text{ for all } s>s_0.
$$
As observed before, $\mathcal{K}_{\nu \circ g}(r)$ for $r< r_{\nu \circ g}$ is an open neighborhood of $\mathcal{L}_{\nu \circ g}$, and so we are done.
}
\end{proof}

\section{Thickness results via transversality}\label{transversality}

In order to prove the thickness result for the Euclidean norm, we use a result of the first-named author with An and Guan \cite{AGK}.
They give a very general condition on the critical locus $\mathcal{L}_\nu$ which guarantees
that the set of \amr\ such that the trajectory \linebreak  $\{a_shx : s>0\}$ eventually stays away from $\mathcal{L}_\nu$ is winning in the sense of Schmidt.
More precisely, the results in  \cite{AGK} deal with a modified version of Schmidt's winning property called 
\textit{hyperplane absolute winning} (HAW).
For the definition of the HAW property, see \cite[\S 2]{BFKRW} or \cite[\S 2.1]{AGK}. 
HAW implies winning in the sense of Schmidt \cite{S}, and this in turn
implies thickness. Furthermore, the class of HAW sets, like those which are winning, is closed under
countable intersections.

To state the aforementioned condition we need some notation.
Let $G$ denote $\SL_d(\R)$, and let $\mathfrak{g}$ denote its Lie algebra $\mathfrak{sl}_d(\R)$.
Let $H \subset G$ denote the subgroup
$$
H=\{u_A: A\in \mr\},
$$
and let $\mathfrak{h}$ denote its Lie algebra.
Fixing weights $\vw = (\va,\vb)$, let $F \subset G$ denote the subgroup
\begin{equation*}
    F = \left\lbrace a_s : s\in \R \right\rbrace
\end{equation*}
where $a_s$ is as in 
\eqref{a_s}.
Let $D \in \mathfrak{g}$ denote the the diagonal element
$$
D = 
  \left[ {\begin{array}{cc}
   \left(1\right)^\va & 0 \\
   0 & -\left(1\right)^\vb \\
  \end{array} } \right] = \operatorname{diag}(\alpha_1,\dots,\alpha_m, -\beta_1,\dots,-\beta_n)
$$
so that 
$$
a_s = \exp(sD).
$$

The adjoint action $\operatorname{ad}(D):\mathfrak{g} \to \mathfrak{g}$ is diagonable:
If we let $E^{i,j}$ denote the $d \times d$ matrix with $1$ in the $(i,j)$-entry and $0$ everywhere else we see that,
when $i\neq j$,
\begin{equation*}
    \operatorname{ad}(D)E^{i,j} = \left(D_{ii} - D_{jj}\right)E^{i,j},
\end{equation*}
and that
\begin{equation*}
    \operatorname{ad}(D)\left(E^{i,i} - E^{j,j}\right) = 0.
\end{equation*}
Here $D_{ij}$ denotes the $(i,j)$-entry of $D$.
Thus, if we let $\lambda$ run over the eigenvalues of $\operatorname{ad}(D)$, we have an eigenspace decomposition
\begin{equation*}
    \mathfrak{g} = \bigoplus \mathfrak{g}_\lambda.
\end{equation*}
Let $\rho$ denote the largest eigenvalue, and let $q:\mathfrak{g}\to \mathfrak{g}$ be the projection 
with image and kernel
\begin{equation*}
    \bigoplus_{\lambda=\rho} \mathfrak{g}_\lambda \text{ and } \bigoplus_{\lambda < \rho} \mathfrak{g}_\lambda
\end{equation*}
respectively.
Let $\mathfrak{h}^{max}$ denote the image $q(\mathfrak{h})$, and let $H^{max}$ denote the connected subgroup of $G$ generated by $\mathfrak{h}^{max}$.
\begin{rem}\label{hmax}
Note that, from the definition of $D$, the eigenvectors with maximal eigenvalues must occur as matrices $E^{i,j}$ with $i \leq m$ and $j \geq n$.
Thus $\mathfrak{h}^{max}$ (which is a subset of $\mathfrak{h}$) is never the zero subspace.
\end{rem}
We also use the notation $T_x(M)$ to denote the tangent space of a submanifold $M$ of $X_d$ at a point $x\in X_d$.
\begin{defn}
A compact submanifold $Z \subset X_d$ is said to be $(F, H^{max})$-transversal if for all $z \in Z$, 
\begin{enumerate}
    \item[(i)] $T_z(Fz) \not\subset T_z(Z)$;
    \item[(ii)] $T_z(H^{max}z) \not\subset T_z(Z) \oplus T_z(Fz)$.
\end{enumerate}
\end{defn}
We can finally state the relevant result from \cite[Theorem 2.8]{AGK}.
\begin{thm}\label{AGKtheorem}
Keeping with the notation above, if $Z \subset X_d$ is an $(F, H^{max})$-transversal compact submanifold, then for any $x \in X_d$,
\begin{equation*}
    \left\lbrace h\in H :  \overline{\{a_shx : s>0\}} \cap Z = \varnothing \right\rbrace
\end{equation*}
is {HAW} in $H$.
\end{thm}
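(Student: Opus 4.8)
The plan is to exhibit, for every $\beta\in(0,1/3)$, an explicit winning strategy for Alice in the $\beta$-hyperplane absolute game played on $H$ — which we identify with $\R^{mn}$ equipped with its natural affine structure as an abelian Lie group — forcing the game outcome $h_\infty$ to satisfy $\overline{\{a_sh_\infty x:s>0\}}\cap Z=\varnothing$. Since a set is HAW precisely when Alice has such a strategy for all $\beta$, this proves the theorem. Two structural facts are used throughout: $\Ad(a_s)$ acts on $\mathfrak g$ diagonably with top eigenvalue $\rho>0$, and its top eigenspace $\mathfrak g_\rho$ equals $\mathfrak h^{max}$, a nonzero subspace of $\mathfrak h$ by Remark~\ref{hmax}. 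Thus at least one direction tangent to $H$ is expanded by $a_s$ at the maximal rate $e^{\rho s}$.

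The technical heart — and the step I expect to be the main obstacle — is a uniform transversality--expansion estimate, in which both hypotheses on $Z$ are consumed. I claim there are constants $C\ge1$, $\epsilon_0\in(0,1)$ and $s^*>0$ so that for every $s\ge s^*$, every $y\in X_d$, every ball $B\subset H$ of radius $r:=e^{-\rho s}$, and every $\epsilon\le\epsilon_0$, the set $\{h\in B:\dist(a_shy,Z)<\epsilon\}$ is either empty or contained in the $C\epsilon r$-neighborhood of some affine hyperplane of $H$. To prove this I would localize near a point $z\in Z$, present $Z$ there as the zero set of a submersion $\phi$ with $\|\phi\|\asymp\dist(\,\cdot\,,Z)$, decompose $\mathfrak h$ into $\Ad(a_s)$-eigenspaces, and Taylor-expand $h\mapsto\phi(a_shy)$ over $B$. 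Condition~(i), $T_zFz\not\subset T_zZ$, makes the $a_s$-flow cross $Z$ transversally, so along any orbit the set of times at which it lies within $\epsilon$ of $Z$ is an interval of length $O(\epsilon)$; after factoring out that flow direction, condition~(ii), $T_zH^{max}z\not\subset T_zZ\oplus T_zFz$, says that perturbing $h$ along a suitable direction of $\mathfrak h^{max}$ moves the near-$Z$ configuration off $Z$ to first order. As that direction is expanded by $e^{\rho s}=1/r$, a length-$r$ segment in it becomes order $1$ in $X_d$ under $a_s$, so only an $O(\epsilon)$-fraction of $B$ — thin in that one direction, hence confined to a slab about a hyperplane — can be carried within $\epsilon$ of $Z$. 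Compactness of $Z$ makes $C,\epsilon_0,s^*$ uniform in $z$ and $y$. The delicate points, which is why I single this out as the hard step, are (a) promoting ``thin in one direction'' to ``inside a genuine codimension-one neighborhood'' regardless of $\dim\mathfrak h^{max}$ and $\operatorname{codim}_{X_d}Z$, and (b) bounding the contribution of the slower eigenspaces of $\Ad(a_s)$ on $\mathfrak h$, whose expansion must not swamp the main term.

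Granting the estimate, Alice's strategy is routine. Set $\epsilon:=\min\{\epsilon_0,\beta/C\}$. At stage $k$ Bob presents a ball $B_k\subset H$ of radius $r_k$, with $r_k\to0$. While $r_k>e^{-\rho s^*}$ Alice makes arbitrary admissible moves; once $r_k\le e^{-\rho s^*}$, put $s_k:=-\rho^{-1}\log r_k$ and let Alice delete the hyperplane-neighborhood of radius $C\epsilon r_k\le\beta r_k$ which, by the estimate, contains $\{h\in B_k:\dist(a_{s_k}hx,Z)<\epsilon\}$ (any admissible slab if that set is empty). This is a legal move, and since every subsequent ball misses the deleted set, the outcome $h_\infty$ satisfies $\dist(a_{s_k}h_\infty x,Z)\ge\epsilon$ for all large $k$. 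As Alice's deletions now have radius the fixed fraction $C\epsilon$ of $r_k$, the ratios $r_{k+1}/r_k$ are bounded below, so the times $s_k$ increase to $\infty$ with bounded consecutive gaps and form an $O(1)$-dense subset of a half-line; since $a_s$ is Lipschitz on a fixed compact neighborhood of $Z$, this upgrades to $\dist(a_sh_\infty x,Z)\ge\epsilon'$ for all sufficiently large $s$, with $\epsilon'>0$ depending only on $\epsilon$.

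It remains to exclude close approaches to $Z$ during the bounded initial time-window. Here one uses that the hypotheses force $\operatorname{codim}_{X_d}Z\ge2$: if $\operatorname{codim}Z$ were $1$ then (i) would give $T_zZ\oplus T_zFz=T_zX_d$, contradicting (ii). Consequently the extra conditions Alice must impose — that $h_\infty x\notin Z$, and that $a_sh_\infty x\notin Z$ for $s$ in the relevant bounded interval — amount to keeping $h_\infty$ out of certain subsets of $H$ of positive codimension, each of which is contained in a countable union of hyperplane-neighborhoods of radii tending to $0$; Alice achieves this by interleaving the corresponding deletions among her moves, which is exactly the mechanism behind the closure of the class of HAW sets under countable intersections. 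The resulting outcome $h_\infty$ then has $\inf_{s>0}\dist(a_sh_\infty x,Z)>0$, i.e.\ $\overline{\{a_sh_\infty x:s>0\}}\cap Z=\varnothing$. Hence the set in the theorem is HAW in $H$, as desired.
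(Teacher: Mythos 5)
The first thing to note is that the paper itself does not prove this statement: it is quoted verbatim from \cite[Theorem 2.8]{AGK}, so there is no internal argument to compare yours against. What you have written is a from-scratch attack on the cited result, and while its architecture is the standard one for such ``escaping orbits are HAW'' theorems (localize near $Z$, use expansion of $\Ad(a_s)$ on $\mathfrak h^{max}$ at rate $e^{\rho s}$ to show the bad set inside a ball of radius $e^{-\rho s}$ is a thin slab, then delete that slab), the proof is not actually carried out. The entire content of the theorem is your ``uniform transversality--expansion estimate,'' and you introduce it with ``I claim'' and ``To prove this I would,'' then explicitly list as unresolved the two points --- (a) upgrading ``thin in one expanded direction'' to containment in a genuine codimension-one slab uniformly over $z\in Z$ and over the base point $y$, and (b) controlling the intermediate eigenspaces of $\Ad(a_s)$ on $\mathfrak h$, which are also expanded (just at slower rates) and whose images under $a_s$ are \emph{not} small over a ball of radius $e^{-\rho s}$ --- that constitute essentially all of the work in \cite{AGK}. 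Point (b) in particular is not a technicality: directions of $\mathfrak h$ with eigenvalue $\lambda\in(0,\rho)$ get stretched by $e^{(\rho-\lambda)(-s)}\cdot e^{\rho s}$... i.e.\ a ball of radius $e^{-\rho s}$ maps to a highly eccentric ellipsoid in $X_d$, so the Taylor expansion of $h\mapsto\phi(a_shy)$ has several first-order terms of incomparable sizes, and one must show the top one dominates in a quantitative, $z$-uniform way. As written, the proposal restates the difficulty rather than overcoming it.

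Two further soft spots. First, the endgame for the ``bounded initial time-window'' asserts that the conditions $a_sh_\infty x\notin Z$ for $s$ in a bounded interval amount to avoiding sets ``contained in a countable union of hyperplane-neighborhoods of radii tending to $0$''; the set $\{h: a_shx\in Z \text{ for some } s\in[0,s^*]\}$ is the preimage of a compact set under a map $H\to X_d$ that need not be a submersion, and showing its complement is HAW is itself a nontrivial claim needing the transversality hypotheses (this is again part of what \cite{AGK} proves). Second, your strategy deletes one slab per move keyed to the single time $s_k=-\rho^{-1}\log r_k$, and then uses bounded gaps $s_{k+1}-s_k$ plus Lipschitz continuity of the flow to interpolate; that interpolation is fine, but it silently requires a lower bound on $r_{k+1}/r_k$, which in the hyperplane absolute game Bob does \emph{not} have to grant (he may shrink radii arbitrarily fast, subject only to $r_{k+1}\ge\beta r_k$ being false in general --- the HAW rules allow $r_{k+1}$ as small as Bob likes as long as he avoids Alice's deleted neighborhood). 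So the times $s_k$ may have unbounded gaps, and the argument that $\dist(a_sh_\infty x,Z)\ge\epsilon'$ for \emph{all} large $s$ does not follow as stated; one must instead delete, at each stage, the slabs corresponding to a whole range of times, which is exactly how \cite{AGK} structures the induction. In short: right general shape, but the theorem's substance is assumed rather than proved.
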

{Clearly zero-dimensional submanifolds are $(F,H^{max})$-transversal.
And since countable intersections of winning sets are winning, on applying the above theorem to the case where $x\in X_d$ is the standard lattice, we have
\begin{cor}\label{thickness-finite}
If $\nu$ is a norm on $\R^d$ such that $\mathcal{L}_\nu$ is finite, then $\mathbf{DI}_{\nu,\omega}$ is thick. $\square$
\end{cor}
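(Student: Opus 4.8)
The plan is to deduce this immediately from Theorem \ref{AGKtheorem} together with Proposition \ref{dynamical-restatement}. First I would note that since $\mathcal{L}_\nu$ is finite, it is in particular a compact zero-dimensional submanifold of $X_d$; by the parenthetical remark preceding the corollary, any zero-dimensional submanifold is automatically $(F,H^{max})$-transversal, because conditions (i) and (ii) in the definition of transversality are vacuous when $T_z(Z) = \{0\}$ — indeed $T_z(Fz)$ is one-dimensional (the flow $a_s$ is non-trivial, so $D$ acts non-trivially on $X_d$) and hence not contained in $\{0\}$, while $T_z(H^{max}z)$ is non-zero by Remark \ref{hmax} and hence not contained in $T_z(Z)\oplus T_z(Fz)$ if and only if it is not contained in $T_z(Fz)$; one should check the latter, but it follows from the fact that $\mathfrak{h}^{max}$ consists of strictly-upper-triangular blocks $E^{i,j}$ with $i\le m < n\le j$, which are linearly independent from $D$.

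Next I would apply Theorem \ref{AGKtheorem} with $Z = \mathcal{L}_\nu$ and $x = \Z^d$ the standard lattice, noting that $h x = h\Z^d$ ranges over all $\Lambda_A$ as $h = u_A$ ranges over $H$. The theorem gives that
\begin{equation*}
\left\lbrace A \in \mr : \overline{\{a_s\Lambda_A : s>0\}} \cap \mathcal{L}_\nu = \varnothing \right\rbrace
\end{equation*}
is HAW in $H\cong \mr$, hence winning, hence thick. It remains to identify this set with $\mathbf{DI}_{\nu,\vw}$. By Proposition \ref{dynamical-restatement}, $A \in \mathbf{DI}_{\nu,\vw}$ iff there is some $0<r<r_\nu$ and $s_0 > 0$ with $\{a_s\Lambda_A : s>s_0\}\cap \mathcal{K}_\nu(r) = \varnothing$; since $\mathcal{K}_\nu(r)$ for $0<r<r_\nu$ is an open neighborhood of $\mathcal{L}_\nu$ and $\bigcup_{0<r<r_\nu}\mathcal{K}_\nu(r)^\circ \supset \mathcal{K}_\nu(r')$ contains $\mathcal{L}_\nu$ in its interior, this last condition is equivalent to the trajectory tail $\{a_s\Lambda_A : s > s_0\}$ staying in the complement of some neighborhood of $\mathcal{L}_\nu$, i.e.\ to $\overline{\{a_s\Lambda_A : s>s_0\}}\cap \mathcal{L}_\nu = \varnothing$ for some $s_0$. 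Finally, dropping the initial segment $\{a_s\Lambda_A : 0<s\le s_0\}$ from the closure can only remove finitely — well, compactly — much; more carefully, $\overline{\{a_s\Lambda_A:s>0\}}\cap\mathcal{L}_\nu=\varnothing$ implies the tail condition trivially, and conversely if the tail avoids a neighborhood of $\mathcal{L}_\nu$ then so does its closure, and the finitely-long compact initial piece $\{a_s\Lambda_A : 0 < s \le s_0\}$ either meets $\mathcal{L}_\nu$ or not — here one uses that $\Lambda_A$ is backward divergent (as in the proof of Proposition \ref{perfect-Dirchlet-finite}), so in fact for the winning statement it is cleanest to observe directly that the set in Theorem \ref{AGKtheorem} is contained in $\mathbf{DI}_{\nu,\vw}$, and a containment suffices since subsets-of-thick need not be thick — so instead I would phrase it as: the HAW set from the theorem is contained in $\mathbf{DI}_{\nu,\vw}$, and an HAW set is thick, but a superset of a thick set is thick, giving the conclusion.

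The main obstacle I anticipate is the bookkeeping in the last paragraph: matching the "eventually avoids a sub-critical-radius neighborhood $\mathcal{K}_\nu(r)$" formulation of Dirichlet-improvability in Proposition \ref{dynamical-restatement} with the "closure of the forward orbit misses $Z$" formulation in Theorem \ref{AGKtheorem}. The cleanest route is to show $\{A : \overline{\{a_s\Lambda_A:s>0\}}\cap\mathcal{L}_\nu=\varnothing\} \subseteq \mathbf{DI}_{\nu,\vw}$ (using that the closed forward orbit missing the compact set $\mathcal{L}_\nu$ means it misses some $\mathcal{K}_\nu(r)^c$-complement... i.e.\ stays inside $\mathcal{K}_\nu(r)$'s complement's... ) — precisely: if the closed forward orbit is disjoint from $\mathcal{L}_\nu$, then by compactness of $\mathcal{L}_\nu$ and the fact that $\mathcal{K}_\nu(r) \downarrow \mathcal{L}_\nu$ as $r \uparrow r_\nu$, the forward orbit is disjoint from $\mathcal{K}_\nu(r)$ for some $r<r_\nu$, which is exactly the criterion of Proposition \ref{dynamical-restatement}. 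Everything else is formal: transversality is vacuous in dimension zero, Theorem \ref{AGKtheorem} delivers an HAW (hence winning, hence thick) set, and thickness passes to supersets.
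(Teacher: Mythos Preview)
Your proposal is correct and follows essentially the same route as the paper: observe that a finite $\mathcal{L}_\nu$ is a zero-dimensional submanifold, hence $(F,H^{max})$-transversal, apply Theorem~\ref{AGKtheorem} at the standard lattice, and conclude thickness. The only cosmetic difference is that the paper alludes to applying Theorem~\ref{AGKtheorem} to each point of $\mathcal{L}_\nu$ separately and then intersecting (invoking that countable intersections of HAW sets are HAW), whereas you apply it once to $Z=\mathcal{L}_\nu$; both are fine. Your explicit verification that the HAW set of Theorem~\ref{AGKtheorem} is contained in $\mathbf{DI}_{\nu,\vw}$ --- via the compactness argument that $\mathcal{K}_\nu(r)\cap\overline{\{a_s\Lambda_A:s>0\}}$ is a decreasing family of compact sets with empty intersection --- spells out a step the paper leaves implicit.
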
}
We can also apply Theorem \ref{AGKtheorem} to get
\begin{proof}[Proof of Theorem \ref{thickness-Dirichlet}(a).]
Recall from Proposition \ref{dynamical-restatement} that $A \in \mathbf{DI}_{2,\vw}$ if and only if there is some $r<r_\nu$ such that
\begin{equation*}
    a_s\Lambda_A \notin \mathcal{K}_2(r)
\end{equation*}
for all sufficiently large $s$.
Here we are considering a neighborhood of the compact set $\mathcal{L}_2 \subset X_d$ which is a finite union of $\operatorname{SO}(d)$-orbits (see \cite[Theorem 3.7]{KR}).
The Lie algebra $\mathfrak{so}(d)$ consists of skew-symmetric matrices and it then becomes straightforward to check that each $\operatorname{SO}(d)$-orbit is an $(F,H^{max})$-transversal submanifold.
Indeed, after identifying with $\mathfrak{g}$, we see that $T_z(Fz) = \text{span}_\R\{ D\}$, while $T_z(H^{max}z)$ includes nonzero upper triangular matrices, so that it is not contained in $\mathfrak{so}(d)\oplus \text{span}_\R\{ D\}$.
Thus, Theorem \ref{AGKtheorem} shows that $\mathbf{DI}_{2,\vw}$ contains a finite intersection of winning sets, and thus is itself thick.
\end{proof}


\end{document}